\documentclass{amsart}
\usepackage{mathrsfs}
\usepackage{times}
\usepackage{amssymb,amsmath,amscd, amsthm,stmaryrd,verbatim, mathrsfs}

\usepackage{color}
\usepackage{}
\usepackage{bbding}
\usepackage{amsfonts}
\usepackage[colorlinks,linkcolor=blue,urlcolor=cyan,citecolor=green,pagebackref]{hyperref}
\usepackage{amssymb,amscd,colordvi,amsthm,verbatim,mathrsfs,latexsym,  graphicx}

\newtheorem{theorem}{Theorem}[section]
\newtheorem{Prop}{Proposition}[section]
\newtheorem{lemma}{Lemma}[section]
\newtheorem{corollary}{Corollary}[section]
\newtheorem{remark}{Remark}[section]
\newtheorem{definition}{Definition}[section]
\newtheorem{example}{Example}[section]
\numberwithin{equation}{section}

\newcommand{\bb}{\mathbb}

\newcommand{\frk}{\mathfrak}

\newcommand{\ra}{\rangle}

\newcommand{\ptl}{\partial}
\newcommand{\dlt}{\delta}

\newcommand{\lmd}{\lambda}

\newcommand{\mbb}{\mathbb}

\newcommand{\Ann}{\mathrm{Ann}}

\newcommand{\eq}{\begin{equation}}
\newcommand{\en}{\end{equation}}
\newcommand{\beqna}[1]{\begin{eqnarray}\label{#1}}
\newcommand{\eeqna}{\end{eqnarray}}
\newcommand{\beqn}[1]{\begin{equation}\label{#1}}
\newcommand{\eeqn}{\end{equation}}

\numberwithin{equation}{section}

\usepackage{hyperref}
\begin{document}

\title[Gelfand-Kirillov dimensions of representations of   $\mathfrak{sl}(n)$]{ Gelfand-Kirillov Dimensions of  the $\bb{Z}^2$-graded
Oscillator Representations of   $\mathfrak{sl}(n)$ }

\author{Zhanqiang Bai}

\address{Institute of Mathematics, Academy of Mathematics and System Sciences, Chinese Academy of Sciences, Beijing 100080, China}
\email{bzq@amss.ac.cn}

\thanks{2010 Mathematics Subject Classification. Primary 22E47; Secondary 17B10}




\maketitle

\begin{abstract}
 We find an exact formula of Gelfand-Kirillov dimensions
  for the infinite-dimensional explicit irreducible $\mathfrak{sl}(n, \bb F)$-modules that appeared in the
  $\bb{Z}^2$-graded oscillator generalizations of the classical theorem on harmonic polynomials established by Luo
  and Xu.  Three infinite subfamilies of these modules
have the minimal Gelfand-Kirillov dimension. They contain weight
modules with unbounded weight multiplicities and completely pointed
modules.

{\bf Key Words:}   Gelfand-Kirillov dimension; Highest-weight module;  Oscillator representation; Universal enveloping algebra.

\end{abstract}

\section{Introduction}
In 1960s, Gelfand-Kirillov \cite{Ge-Ki} introduced a quantity to
measure the rate of growth of an algebra in terms of any generating
set, which is now known as  Gelfand-Kirillov dimension. In 1970s,
Vogan \cite{Vogan-78} and Joseph \cite{Joseph-78} used the
Gelfand-Kirillov dimension to measure the size of the
infinite-dimensional modules. Gelfand-Kirillov dimension  has been
an important invariant in the theory of algebras over a field for
the past forty years. Although it is rarely exact, it has the
advantage over Krull dimension of being both symmetric and ideal
invariant. It has been applied successfully to enveloping algebras,
Weyl algebras, and more generally to filtered and graded algebras.
But in general, the Gelfand-Kirillov dimension of an
infinite-dimensional module is not easy to compute.

A module of a finite-dimensional simple Lie algebra is called a {\it
weight module} if it is a direct sum of its weight subspaces. Let
$M$ be an irreducible highest-weight module for a finite-dimensional
simple Lie algebra $\mathfrak{g}$. Then $M$ is naturally a weight
module with finite-dimensional weight subspaces. Denote by $d_{M}$
its Gelfand-Kirillov dimension. We fix a Cartan subalgebra
$\mathfrak{h}$, a root system $\Delta\subset \mathfrak{h}^*$  and a
set of positive roots $\Delta_+ \subset \Delta$. Let $\rho$ be half
the sum of all positive roots. Suppose that $\beta$ is the highest
root. It is well known that $d_M=0$ if and only if $M$ is
finite-dimensional, in which case irreducible modules are classified
by the highest-weight theory. From Vogan \cite{Vogan-81} and Wang
\cite{wang-99}, we know that the next smallest integer occurring is
$d_M=(\rho,\beta^{\vee})$. We call them the \emph{minimal
Gelfand-Kirillov dimension module}. These small modules are of great
interest in representation theory. A general introduction can be
found in Vogan \cite{Vogan-81}. Fernando \cite{Fer} showed that the
only simple Lie algebras which have simple torsion-free modules (or
cuspidal modules, i.e.,weight modules on which all root vectors of
the Lie algebra act bijectively)  are those of types $A_n$ or $C_n$.
He also showed that these finitely generated torsion-free $\mathcal
{U}(\mathfrak{g})$-modules have the minimal Gelfand-Kirillov
dimension. A similar result was independently obtained by Futorny
\cite{Fu}. In the last 20 years cuspidal modules have been
extensively studied by Benkart, Britten, Grantcharov, Hooper,
Khomenko, Lemire, Mathieu, Mazorchuk, Serganova and others (see e.g.
\cite{Br-Ho-Le},\cite{Br-Kh-Le}, \cite{Gr-Se-06},\cite{Gr-Se-10},
\cite{Ma}).

Benkart, Britten and Lemire \cite{Be-Br-Le} generalized Fernando's
work. They proved that infinite-dimensional weight modules, whose
weight-subspace dimensions are bounded, have the minimal
Gelfand-Kirillov dimension and exist only for finite-dimensional
simple Lie algebras of types $A_{n}$  and  $C_n$. Britten and Lemire
\cite{Br-Le} described all weights $\omega \in \mathfrak{h}^*$ such
that the corresponding irreducible highest-weight
$\mathfrak{sp}(2n,\mathbb{C})$-module $L(\omega)$ has bounded
multiplicities. Sun \cite{Sun} classified genuine irreducible
 lowest-weight modules of $\mathfrak{sp}(2n,
\mathbb{C})$ with minimal Gelfand-Kirillov dimension. But  other examples of minimal Gelfand-Kirillov dimension modules are not well-known.

In classical harmonic analysis,  a  fundamental theorem  says that
the spaces of homogeneous harmonic polynomials are irreducible
modules of the corresponding orthogonal Lie group (algebra) and the
whole polynomial algebra is a free module over the invariant
polynomials generated by harmonic polynomials. Bases of these
irreducible modules can be obtained easily (e.g., cf. \cite{Xu-08}).
The algebraic beauty of the above theorem is that Laplace equation
characterizes the irreducible submodules of the polynomial algebra
and the corresponding quadratic invariant gives a decomposition of
the polynomial algebra into a direct sum of irreducible submodules,
namely, the complete reducibility. Algebraically, this can be
interpreted as an $(\mathfrak{sl}(2,\mbb{R}),\mathfrak{o}(n,\mbb{R}))$ Howe duality.
Recently Luo and Xu \cite{Luo-Xu} established the $\bb{Z}^2$-graded
oscillator generalizations of the above theorem for
$\mathfrak{sl}(n,\bb F)$, where the irreducible submodules are $\bb
Z^2$-graded homogeneous polynomial solutions of deformed Laplace
equations. An important feature of the irreducible modules in
\cite{Luo-Xu} is that the corresponding representation formulas are
simple and bases are easily given. In fact, these are explicit
infinite-dimensional highest-weight irreducible $\mathfrak{sl}(n,\bb
F)$-modules. The goal of this paper is to find an exact formula of
Gelfand-Kirillov dimension for them. It turns out that these
Gelfand-Kirillov dimensions are independent of the double grading
and four infinite subfamilies of these modules have the minimal
Gelfand-Kirillov dimension. In general, our result will be useful in
studying the $\mathfrak{sl}(n,\bb F)$-modules with a given
Gelfand-Kirillov dimension. Below we give more detailed technical
introduction.

Throughout this paper, the base field $\bb F$ has characteristic
$0$. For convenience, we will use the notion
$\overline{{i,i+j}}=\{i,i+1,i+2,...,i+j\}$ for integers $i$ and $j$
with $i\leq j$. Denote by $\bb{N}$ the additive semigroup of
nonnegative integers. Let $E_{r,s}$ be the square matrix with $1$ as
its $(r,s)$-entry and $0$ as the others. Moreover, we always assume
that $n\geq 2$ is an integer.  Denote
${\mathscr{B}}=\bb{F}[x_1,...,x_n,y_1,...,y_n]$. Fix
 $n_1,n_2\in\overline{1,n}$ with $n_1\leq n_2$. Changing operators $\partial_{x_r}\mapsto -x_r,\;
 x_r\mapsto
\partial_{x_r}$  for $r\in\overline{1,n_1}$ and $\partial_{y_s}\mapsto -y_s,\;
 y_s\mapsto\partial_{y_s}$  for $s\in\overline{n_2+1,n}$ in the canonical oscillator representation $E_{i,j}|_{\mathscr{B}}=x_{i}\partial_{x_j}-y_{j}\partial_{y_i}$ for $i,j \in \overline{1,n}$, we get the following non-canonical oscillator representation of $\mathfrak{sl}(n,\bb{F})$ on
  ${\mathscr{B}}$ determined by
\eq E_{i,j}|_{\mathscr{B}}=E_{i,j}^x-E_{j,i}^y\qquad \text{for}~ i,j\in\overline{1,n}\en with
\eq E_{i,j}^x|_{\mathscr{B}}=\left\{\begin{array}{ll}-x_j\partial_{x_i}-\delta_{i,j}&\mbox{if}\;
i,j\in\overline{1,n_1};\\ \partial_{x_i}\partial_{x_j}&\mbox{if}\;i\in\overline{1,n_1},\;j\in\overline{n_1+1,n};\\
-x_ix_j &\mbox{if}\;i\in\overline{n_1+1,n},\;j\in\overline{1,n_1};\\
x_i\partial_{x_j}&\mbox{if}\;i,j\in\overline{n_1+1,n}
\end{array}\right.\en
and
\eq E_{i,j}^y|_{\mathscr{B}}=\left\{\begin{array}{ll}y_i\partial_{y_j}&\mbox{if}\;
i,j\in\overline{1,n_2};\\ -y_iy_j&\mbox{if}\;i\in\overline{1,n_2},\;j\in\overline{n_2+1,n};\\
\partial_{y_i}\partial_{y_j} &\mbox{if}\;i\in\overline{n_2+1,n},\;j\in\overline{1,n_2};\\
-y_j\partial_{y_i}-\delta_{i,j}&\mbox{if}\;i,j\in\overline{n_2+1,n}.
\end{array}\right.\en
The related variated  Laplace operator becomes
\eq {\mathscr{D}}=-\sum_{i=1}^{n_1}x_i\partial_{y_i}+\sum_{r=n_1+1}^{n_2}\partial_{x_r}\partial_{y_r}-\sum_{s=n_2+1}^n
y_s\partial_{x_s}\en and its dual
\eq\eta=\sum_{i=1}^{n_1}y_i\partial_{x_i}+\sum_{r=n_1+1}^{n_2}x_ry_r+\sum_{s=n_2+1}^n
x_s\partial_{y_s}.\en
 Set
\eq{\mathscr{B}}_{\langle
\ell_1,\ell_2\rangle}=\mbox{Span}\{x^\alpha
y^\beta\mid\alpha,\beta\in\bb{N}\:^n,\sum_{r=n_1+1}^n\alpha_r-\sum_{i=1}^{n_1}\alpha_i=\ell_1,
\sum_{i=1}^{n_2}\beta_i-\sum_{r=n_2+1}^n\beta_r=\ell_2\}\en for
$\ell_1,\ell_2\in\bb{Z}$. Define
\eq{\mathscr{H}}_{\langle\ell_1,\ell_2\rangle}=\{f\in
{\mathscr{B}}_{\langle \ell_1,\ell_2\rangle}\mid
{\mathscr{D}}(f)=0\}.\en Luo and Xu \cite{Luo-Xu} proved that
 for any $\ell_1,\ell_2\in\bb{Z}$ such that
$\ell_1+\ell_2\leq n_1-n_2+1-\delta_{n_1,n_2}$,
${\mathscr{H}}_{\langle \ell_1,\ell_2 \rangle}$ is an irreducible
highest-weight $\mathfrak{sl}(n,\bb{F})$-module. Moreover, the
homogeneous subspace $\mathscr{
B}_{\langle\ell_1,\ell_2\rangle}=\bigoplus_{m=0}^\infty\eta^m(\mathscr{
H}_{\langle\ell_1-m,\ell_2-m\rangle})$ is a direct sum of
irreducible submodules. In some special cases, they obtained more
general results. The following is the main theorem of this paper.

\begin{theorem}\label{main} For any $\ell_1,\ell_2\in\bb{Z}$, if
$\mathfrak{sl}(n,\bb{F})$-module  ${\mathscr{H}}_{\langle
\ell_1,\ell_2 \rangle}$ is irreducible, then it has the
Gelfand-Kirillov dimension
\eq \label{formula}
d=\left\{
    \begin{array}{ll}
      2n-2, &  \hbox{\text{\emph{if}} $1<n_1< n_2<n-1$ \emph{or} $3\leq n_1=n_2\leq n-3, n\geq 7$;} \\
      2n-3, & \hbox{\emph{if} $1=n_1< n_2<n$ \emph{or} $n_1<n_2=n-1$ \emph{or} $n_1=n_2=3<n=6$;} \\
       2n-4, & \hbox{if $n_1=n_2=2<n-1 \mathrm{~or~} 1<n_1=n_2=n-2$;}\\
       n, & \hbox{\emph{if} $1<n_1=n_2<n-1$;} \\
       n-1, & \hbox{\emph{otherwise}.}
    \end{array}
  \right. \en
\end{theorem}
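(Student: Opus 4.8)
The plan is to compute the Gelfand-Kirillov dimension directly from the definition via the associated graded module / characteristic variety, exploiting the fact that $\mathscr{H}_{\langle\ell_1,\ell_2\rangle}$ sits inside the polynomial algebra $\mathscr{B}$ with an explicit basis. Specifically, for a finitely generated graded module $M = \bigoplus_k M_k$ over the polynomial (or enveloping) algebra, $\mathrm{GKdim}\, M$ equals the degree of the Hilbert polynomial of $M$ with respect to a suitable filtration, plus one if we use the natural $\bb Z$-grading coming from total degree; equivalently one studies the growth of $\dim (U_k(\mathfrak{sl}(n))\cdot v)$ for a cyclic (highest-weight) generator $v$. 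So the first step is to fix the highest-weight vector of $\mathscr{H}_{\langle\ell_1,\ell_2\rangle}$ identified by Luo--Xu, and describe the standard filtration $U_k(\mathfrak{n}^-)v$.

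First I would set up the combinatorial model: a monomial basis of $\mathscr{H}_{\langle\ell_1,\ell_2\rangle}$ is given by the $x^\alpha y^\beta \in \mathscr{B}_{\langle\ell_1,\ell_2\rangle}$ satisfying $\mathscr{D}(f)=0$, and since $\mathscr{D}$ is a first-plus-second-order operator coupling the $x$ and $y$ variables, the harmonic condition cuts out an explicit subset. The key is to count, for each degree $N$ (total degree in all $2n$ variables, say), the dimension of the span of those basis monomials of degree $\le N$; the growth rate of this count is the GK dimension. I would organize the count according to the five regimes in \eqref{formula}, because the structure of $\mathscr{D}$ (which of the three sums $-\sum x_i\partial_{y_i}$, $\sum\partial_{x_r}\partial_{y_r}$, $-\sum y_s\partial_{x_s}$ are present or empty) changes exactly when $n_1=1$, $n_1=n_2$, $n_2=n-1$, etc. The main computation is: in the generic case $1<n_1<n_2<n-1$ all three blocks are nonempty, the harmonic constraint is a single equation in $2n$ effective variables after accounting for the grading constraints (two more linear constraints fixing $\ell_1,\ell_2$), so one expects growth of order $N^{2n-3}$ for the graded pieces, hence GK dimension $2n-2$ once the extra ``$+1$'' from passing between the graded Hilbert function and the enveloping-algebra filtration is included; the degenerate cases lose effective variables and drop correspondingly.

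An alternative, and probably cleaner, route is to use the highest weight directly: by results of Vogan and Wang cited in the introduction, and by Joseph's theory, $d_M$ is determined by the associated variety of $\mathrm{Ann}(M)$ in $\mathfrak{g}^*$, and for these oscillator modules the annihilator is (a sum of) completely prime primitive ideals whose associated variety is the closure of a nilpotent orbit; one reads off $d_M$ as the dimension of that orbit closure (its half, in the appropriate normalization). I would identify which nilpotent orbit of $\mathfrak{sl}(n)$ arises --- for the minimal-GK-dimension cases it must be the minimal nilpotent orbit (dimension $2n-2$), giving $d_M = (\rho,\beta^\vee) = n-1$, and this matches the last two rows; for the other cases one gets orbits of rank $2$ or $3$ (Richardson or subregular-type), whose dimensions give $2n-3$, $2n-4$, and the Jordan-type $\le$ two-block orbits giving $2n-2$; the row with value $n$ corresponds to the Howe-dual pair $(\mathfrak{gl}(1),\cdot)$-type situation where the module is a genuine weight module of a different flavor. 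To pin down the orbit I would use the explicit action formulas (1.1)--(1.3): compute $\overline{\{\xi \in \mathfrak{sl}(n)^* : \xi^k|_{\text{symbol}} \text{ kills the symbol module}\}}$, i.e. compute the characteristic variety of the $\mathscr{D}$-harmonic module as a $\mathcal{D}$-module or as a graded $S(\mathfrak{g})$-module.

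The hard part will be the exact bookkeeping in the boundary cases --- precisely the lines ``$3\le n_1=n_2\le n-3,\ n\ge 7$'' versus ``$n_1=n_2=3<n=6$'' and the $n_1=n_2=2$ line --- where small-rank coincidences cause the naive orbit dimension count to jump, so these require separate, careful treatment of the singular support rather than the generic estimate; I expect most of the paper's technical weight to sit there, and I would handle them by explicit construction of enough linearly independent elements of $U_k(\mathfrak{n}^-)v$ to get the lower bound on $d$, matched against an upper bound from exhibiting $M$ as a quotient of a generalized Verma module (or a tensor-module/induced-module) whose GK dimension is computable from the dimension of the corresponding nilradical.
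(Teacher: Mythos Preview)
Your opening paragraph is on target: the paper does exactly compute the growth of $\mathcal{U}_k(\mathfrak{n}^-)v_\lambda$ for the explicit highest-weight vector $v_\lambda$ supplied by Luo--Xu, case by case. But from that point on your two concrete strategies diverge from what the paper actually does. The paper neither counts harmonic monomials by total polynomial degree nor invokes nilpotent orbits or primitive-ideal theory. Instead it writes each negative root vector $E_{j,i}|_{\mathscr{B}}$ as the explicit first- or zeroth-order differential operator given in (1.1)--(1.3), separates $\mathfrak{g}_-$ into a part $\mathfrak{g}_1$ whose action on $v_\lambda$ spans only a finite-dimensional space $M_0$ and a complementary part $\mathfrak{g}_2$, and then observes that the generators of $\mathfrak{g}_2$ act, up to lower-order correction terms, as \emph{multiplication} by the quadratics $x_ix_s$, $y_sy_t$, and $x_ix_t-y_iy_t$. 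The whole computation thus reduces to the purely combinatorial problem of estimating
\[
\dim\,\mathrm{Span}\Bigl\{\prod (x_ix_s)^{p_{is}}\prod(y_sy_t)^{q_{st}}\prod(x_ix_t-y_iy_t)^{h_{it}}\ :\ \textstyle\sum p+\sum q+\sum h=k\Bigr\},
\]
which is handled by elementary upper and lower monomial bounds (Propositions~4.1--4.3).

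Your first route (growth of the harmonic space by polynomial degree) would require an extra argument comparing the degree filtration on $\mathscr{B}$ with the $\mathcal{U}_k$-filtration, since some root vectors raise degree by $2$ and others lower it; the paper avoids this by working with $\mathcal{U}_k$ directly. Your second route (associated variety / nilpotent orbit) is a genuinely different and more structural argument; it could in principle succeed, but you have not identified the orbits, and the hand-wave about ``rank $2$ or $3$'' or ``Howe-dual pair'' for the $d=n$ line is not correct as stated. Most importantly, the delicate splits in the $n_1=n_2$ cases --- $2n-2$ vs.\ $2n-3$ vs.\ $2n-4$ --- arise in the paper not from orbit coincidences but from linear dependences among the products $\prod(x_ix_t-y_iy_t)^{h_{it}}$ when $\min(n_1,n-n_1)\le 3$ (a determinantal-variety phenomenon), established by explicit construction of independent monomials for the lower bound. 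Your proposal correctly flags these as the hard cases but does not supply the mechanism, so as written it is a plausible outline rather than a proof.
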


We find that some of these irreducible highest-weight $\mathfrak{sl}(n,\bb{F})$-modules have the minimal Gelfand-Kirillov dimension. The result is as follows.

\begin{corollary} An irreducible
$\mathfrak{sl}(n,\bb{F})$-module ${\mathscr{H}}_{\langle
\ell_1,\ell_2 \rangle}$  has the minimal Gelfand-Kirillov dimension $n-1$
if and only if \begin{enumerate}
                        \item[(1)] $n_1< n_2=n$.
                           \item[(2)] $n_1=n_2=1$.
                           \item[(3)] $n_1=n_2=n-1$.
                         \end{enumerate}

\end{corollary}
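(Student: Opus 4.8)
The plan is to read the minimal value $n-1$ directly off the five-case formula \eqref{formula} of Theorem \ref{main} and match it against the parameter constraints $1\le n_1\le n_2\le n$. From \eqref{formula}, the value $d=n-1$ occurs exactly in the ``otherwise'' case, so the task is to spell out what ``otherwise'' means: it is the set of admissible pairs $(n_1,n_2)$ that fall into none of the first four cases. First I would recall, following Vogan \cite{Vogan-81} and Wang \cite{wang-99}, that for type $A_{n-1}$ the highest root $\beta$ satisfies $(\rho,\beta^\vee)=n-1$, so $n-1$ is indeed the minimal nonzero Gelfand-Kirillov dimension for an infinite-dimensional irreducible highest-weight $\mathfrak{sl}(n,\bb F)$-module; hence a module ${\mathscr H}_{\langle \ell_1,\ell_2\rangle}$ has minimal Gelfand-Kirillov dimension precisely when $d=n-1$ in \eqref{formula}, and never smaller.

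Next I would carry out the elementary case analysis. The four non-``otherwise'' branches of \eqref{formula} exhaust the pairs with $n_1\ge 2$ and $n_2\le n-1$: indeed, if $1<n_1\le n_2<n-1$ then either $n_1<n_2$ (covered by branch one when $n_2<n-1$, i.e.\ always here, or branch two when $n_2=n-1$—but we assumed $n_2<n-1$) or $n_1=n_2$ (covered by branches one, three, or four according to the size of $n_1=n_2$), and the boundary subcases $n_1<n_2=n-1$, $n_1=n_2=n-2$, $n_1=n_2=2$, etc., are precisely the special lines listed in branches two and three. Conversely, the complementary set—pairs with $n_1=1$ or with $n_2=n$—is exactly where $d=n-1$, \emph{except} that $n_1=1<n_2<n$ is pulled into branch two ($d=2n-3$) and $n_1=n_2=1$ must be checked to land in ``otherwise''. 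Tracking these exceptions carefully, the pairs giving $d=n-1$ are: (i) $n_2=n$ with $n_1$ arbitrary, equivalently $n_1<n_2=n$ together with the single pair $n_1=n_2=n$ (which is finite-dimensional and thus excluded from the irreducible infinite-dimensional hypothesis, or handled separately); (ii) $n_1=n_2=1$; and (iii) $n_1=n_2=n-1$. This reproduces the three listed conditions (1)–(3). The only genuinely delicate bookkeeping is making sure that the borderline pairs $n_1=n_2=n-1$ and $n_1=1<n_2=n$ are assigned to the correct branch—one must check that $n_1=n_2=n-1$ is \emph{not} captured by ``$1<n_1=n_2=n-2$'' nor by ``$1<n_1=n_2<n-1$'' (it fails both inequalities, so it is ``otherwise''), while $n_1=1<n_2=n$ satisfies ``$1=n_1<n_2<n$'' only if $n_2<n$, hence when $n_2=n$ it too falls to ``otherwise.''

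The main obstacle, such as it is, is purely combinatorial: the five cases in \eqref{formula} involve overlapping strict and non-strict inequalities together with the side conditions $n\ge 7$, $n=6$, $n<n-1$, etc., and one must verify that their union is all admissible $(n_1,n_2)$ and that the ``otherwise'' region is exactly conditions (1)–(3). I would organize this as a short table over the regimes $n_1=1$ vs.\ $n_1\ge 2$ crossed with $n_2=n$ vs.\ $n_2\le n-1$, refining the diagonal $n_1=n_2$ by the extreme values $1,2,n-2,n-1$; in each cell \eqref{formula} gives a single value, and $n-1$ appears exactly in the cells comprising (1)–(3). Since $n-1$ is the minimal possible value by the Vogan–Wang bound, the equivalence claimed in the corollary follows immediately. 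No new estimates beyond Theorem \ref{main} are needed.
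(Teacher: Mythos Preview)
Your approach is correct and is essentially the same as the paper's: both derive the corollary directly by reading off the ``otherwise'' branch of formula \eqref{formula} in Theorem \ref{main}. The paper's own proof (Section 5) is a single sentence invoking the formula and Luo--Xu \cite{Luo-Xu}, whereas you spell out the case analysis explicitly and add the Vogan--Wang justification that $n-1$ is the minimal nonzero value; your handling of the boundary pair $n_1=n_2=n$ (finite-dimensional, hence excluded) is also appropriate.
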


From Benkart-Britten-Lemire \cite{Be-Br-Le},  an infinite-dimensional weight module is called \emph{completely pointed} if its weight spaces are all one-dimensional. Using the results on modules for Weyl algebras, they provided the whole list of simple infinite-dimensional completely pointed modules. These modules are described in terms of multiplication and differentiation operators on ``polynomials". Comparing with their result \cite{Be-Br-Le}, we can give  different realizations for some of these simple infinite-dimensional completely pointed modules.

\begin{corollary} An irreducible
$\mathfrak{sl}(n,\bb{F})$-module ${\mathscr{H}}_{\langle
\ell_1,\ell_2 \rangle}$ (with highest weight vector $v_{\lambda}$ of weight $\lambda$)  is completely pointed
if and only if \begin{enumerate}
    \item[(1)] $n_1+1<n_2=n$,\\
                            ${\mathscr{H}}_{\langle
\ell_1,\ell_2 \rangle}={\mathscr{H}}_{\langle
m_1,0 \rangle}$,  with $v_{\lambda}=x_{n_1+1}^{m_1}$, $\lambda=-(m_1+1)\lambda_{n_1}+m_1\lambda_{n_1+1}$,
$m_1\in\bb{N}$ \\
or ${\mathscr{H}}_{\langle
\ell_1,\ell_2 \rangle}={\mathscr{H}}_{\langle
-m_1,0 \rangle}$,  with $v_{\lambda}=x_{n_1}^{m_1}$, $\lambda=m_1\lambda_{n_1-1}-(m_1+1)\lambda_{n_1}$, $m_1\leq n-n_1-2$ or $m_{1}\geq n-n_1-1$.

                           \item[(2)] $n_1+1=n_2=n$,\\
                           ${\mathscr{H}}_{\langle
\ell_1,\ell_2 \rangle}={\mathscr{H}}_{\langle
-m_1,m_2 \rangle}$,  with $v_{\lambda}=x_{1}^{m_1}y_{2}^{m_{2}}$, $\lambda=(m_2-m_1-1)\lambda_{1}$,
$m_i\in\bb{N}$ and $m_2\leq m_1$, $n=2$ \\
or  ${\mathscr{H}}_{\langle
\ell_1,\ell_2 \rangle}={\mathscr{H}}_{\langle
-m_1,0 \rangle}$,  with $v_{\lambda}=x_{n-1}^{m_1}$, $\lambda=m_1\lambda_{n-2}-(m_1+1)\lambda_{n-1}$,
$m_1\in\bb{N}$ \\
or ${\mathscr{H}}_{\langle
\ell_1,\ell_2 \rangle}={\mathscr{H}}_{\langle
m,0 \rangle}$,  with $v_{\lambda}=x_{n-1}^{m}$, $\lambda=m\lambda_{n-2}-(m+1)\lambda_{n-1}$,
$m\in\bb{Z}$.

                           \item[(3)] $n_1=n_2=1$,\\
                           ${\mathscr{H}}_{\langle
\ell_1,\ell_2 \rangle}={\mathscr{H}}_{\langle
-m_1,0 \rangle}$,  with $v_{\lambda}=x_{1}^{m_1}$, $\lambda=-(m_1+2)\lambda_{1}$,
$m_1\in\bb{N}$\\
or  ${\mathscr{H}}_{\langle
\ell_1,\ell_2 \rangle}={\mathscr{H}}_{\langle
m_1+1,-m_1-1 \rangle}$,  with $v_{\lambda}=\zeta_{2}^{m_1+1}$, $\lambda=-(m_1+3)\lambda_{1}$,
$m_1\in\bb{N}$, $n=3$.
                           \item[(4)] $n_1=n_2=n-1$,\\ ${\mathscr{H}}_{\langle
\ell_1,\ell_2 \rangle}={\mathscr{H}}_{\langle
0,-m_2 \rangle}$,  with $v_{\lambda}=y_{n}^{m_2}$, $\lambda=-(m_2+2)\lambda_{n-1}$,
$m_2\in\bb{N}$\\
or ${\mathscr{H}}_{\langle
\ell_1,\ell_2 \rangle}={\mathscr{H}}_{\langle
-m_2-1,m_2+1 \rangle}$,  with $v_{\lambda}=\zeta_{1}^{m_2+1}$, $\lambda=-(m_2+3)\lambda_{2}$,
$m_2\in\bb{N}$, $n=3$.

                         \end{enumerate}

\end{corollary}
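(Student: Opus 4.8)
The plan is to reduce the problem to finitely many subfamilies by a Gelfand--Kirillov dimension argument, and then to analyze each subfamily directly from the polynomial model together with the explicit bases of Luo--Xu \cite{Luo-Xu}. For the reduction, note that a completely pointed module is, by definition, infinite-dimensional with all weight multiplicities equal to $1$, in particular bounded; by the theorem of Benkart--Britten--Lemire \cite{Be-Br-Le} it then has the minimal Gelfand--Kirillov dimension, which for $\mathfrak{sl}(n,\bb F)$ equals $n-1$. Hence the preceding corollary on the minimal Gelfand--Kirillov dimension forces a completely pointed $\mathscr{H}_{\langle\ell_1,\ell_2\rangle}$ into one of the three situations $n_1<n_2=n$, $n_1=n_2=1$, $n_1=n_2=n-1$; splitting the first according to whether $n_1+1<n_2$ or $n_1+1=n_2$ gives exactly the four headings (1)--(4) of the statement. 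It then remains, within each heading, to single out the pairs $(\ell_1,\ell_2)$ for which $\mathscr{H}_{\langle\ell_1,\ell_2\rangle}$ is irreducible and completely pointed and to exhibit the highest-weight vector.

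A direct computation of the diagonal action shows that on a monomial $x^\alpha y^\beta\in\mathscr{B}$ the operator $E_{i,i}|_{\mathscr{B}}$ has eigenvalue
\eq c_i=\left\{\begin{array}{ll}-\alpha_i-\beta_i-1,&i\in\overline{1,n_1};\\ \alpha_i-\beta_i,&i\in\overline{n_1+1,n_2};\\ \alpha_i+\beta_i+1,&i\in\overline{n_2+1,n},\end{array}\right.\en
so the weight of $x^\alpha y^\beta$ has $\lambda_k$-coordinate $c_k-c_{k+1}$. Evaluating this on $x_{n_1+1}^{m_1}$, $x_{n_1}^{m_1}$, $x_{n-1}^{m}$, $y_n^{m_2}$, and on the polynomials $\zeta_1^{m_2+1}$, $\zeta_2^{m_1+1}$ arising in the $n=3$ subcases, yields precisely the highest weights $\lambda$ listed in the statement, and a short check using the case lists for $E_{i,j}^x$ and $E_{i,j}^y$ confirms each of these vectors is killed by $\mathscr{D}$ and by every positive root vector $E_{i,j}$ with $i<j$. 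Since $\mathscr{H}_{\langle\ell_1,\ell_2\rangle}$ is a weight submodule of $\mathscr{B}_{\langle\ell_1,\ell_2\rangle}$ equipped with the explicit Luo--Xu harmonic basis, it is completely pointed exactly when the weights of that basis are pairwise distinct; in the listed cases the $\bb Z^2$-grading is rigid enough to force this (for example $\ell_2=0$ with $n_2=n$ forces $\beta=0$, and then the grading relation forces distinct monomials to have distinct weights), which gives complete pointedness with highest-weight vectors as in the statement.

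For the converse, one must show that within headings (1)--(4) but for the excluded $(\ell_1,\ell_2)$ the module is either reducible or not completely pointed. Irreducibility is easily tracked: the Luo--Xu theorem above gives it precisely when $\ell_1+\ell_2\le n_1-n_2+1-\delta_{n_1,n_2}$ --- in heading (1) this is the regime $m_1\ge n-n_1-1$, in the $n=2$ subcase of (2) the condition $m_2\le m_1$, and similarly for the others --- while their refined results cover the complementary regime, which accounts for the ``or'' clauses in the statement. The multiplicity part is the main obstacle: for each excluded pair one must produce a weight carried by two basis elements --- typically obtained by transporting a unit of degree between two variables that both survive in $\ker\mathscr{D}$, for instance $x_2x_1^{m_1+1}$ versus $x_1^{m_1}y_1y_2$ when $n_1=n_2=1$ --- and then verify that $\mathscr{D}$ does \emph{not} cut their span down to a line; and in the retained cases one must verify the opposite, since $\mathscr{D}$ frequently maps a genuinely two-dimensional monomial span onto a line and thereby leaves a one-dimensional harmonic subspace. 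This close tracking of $\mathscr{D}$ on small monomial spans, together with the sporadic low-rank cases ($n=2$ in (2), $n=3$ in (3) and (4), the latter handled in the $\zeta_i$-coordinates), is the crux of the argument. As an alternative one may bypass most of this bookkeeping by invoking the classification of simple infinite-dimensional completely pointed modules in \cite{Be-Br-Le} and matching highest weights, the point of the corollary being precisely that these $\mathscr{H}_{\langle\ell_1,\ell_2\rangle}$ give new realizations of those modules.
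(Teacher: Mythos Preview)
Your reduction step is exactly right and is, in essence, all the paper offers: the paper does not give a standalone proof of this corollary but merely states it after remarking that one can ``compare with their result \cite{Be-Br-Le}'', i.e., match the highest weights of the $\mathscr{H}_{\langle\ell_1,\ell_2\rangle}$ having minimal Gelfand--Kirillov dimension against the Benkart--Britten--Lemire list of completely pointed modules. Your alternative route at the end --- invoke the classification in \cite{Be-Br-Le} and match highest weights --- is therefore precisely the paper's intended argument, and your GK-dimension reduction via the preceding corollary is the natural first step for that matching.

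Where you go beyond the paper is in sketching a direct, classification-free verification by tracking $\ker\mathscr{D}$ on monomial weight spaces. That is a genuine alternative and in principle more self-contained, but your sketch is not yet a proof: you correctly identify the crux (showing that in the excluded cases some weight space of $\mathscr{H}_{\langle\ell_1,\ell_2\rangle}$ has dimension at least $2$, and in the retained cases every weight space has dimension exactly $1$), but you do not carry it out. Your illustrative example is also misplaced: the pair $x_2x_1^{m_1+1}$ and $x_1^{m_1}y_1y_2$ for $n_1=n_2=1$ lies in $\mathscr{B}_{\langle -m_1,0\rangle}$, which is a \emph{retained} case, and indeed $\mathscr{D}$ sends both to $-x_1^{m_1+1}y_2$, so their span meets $\ker\mathscr{D}$ in a line --- consistent with complete pointedness, not an obstruction to it. For the excluded cases you would need, e.g., an example in $\mathscr{H}_{\langle -m_1,-m_2\rangle}$ with $m_2>0$. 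If you pursue the direct route, this bookkeeping must be done systematically; otherwise the BBL comparison (which is what the paper does) suffices.
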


Benkart-Britten-Lemire \cite{Be-Br-Le} showed that when infinite-dimensional simple $\mathfrak{sl}(n, \bb F)$-modules have bounded weight multiplicities,  they must  have the minimal GK-dimension $n-1$. From our formula and Luo-Xu \cite{Luo-Xu}, we can easily find that the converse doesn't hold. For example, when $n_1=n_2=1<n$, the  irreducible highest-weight module ${\mathscr{H}}_{\langle-1, -1\rangle}$ has  a highest weight vector
$x_{1}y_{2}$ of weight
$-4\lmd_{1}+\lmd_{2}$. But it doesn't have bounded weight multiplicities.

In Section 2 we recall the basic definition of the Gelfand-Kirillov dimension. In Section 3, we give the formula on the Gelfand-Kirillov dimensions of these highest-weight modules ${\mathscr{H}}_{\langle \ell_1,\ell_2 \rangle}$. In Section 4, we give a proof for our formula. The proof is given in a case-by-case way. In Section 5, we give a corollary about the $\mathfrak{sl}(n,\bb{F})$-modules which have the minimal Gelfand-Kirillov dimension.

\section{Preliminaries on Gelfand-Kirillov Dimension}
In this section we recall the  definition and some properties of the Gelfand-Kirillov dimension.  The details can be found in Refs.\cite{Bo-Kr, Ja, Kr-Le, NOTYK, Vogan-78, Vogan-91}.

\begin{definition}
Let $A$ be an algebra (not necessarily associative) generated  by a
finite-dimensional subspace $V$. Let $V^n$ denote the linear span of
all products of length   at most $n$ in elements of $V$. The
\emph{Gelfand-Kirillov dimension} of $A$  is defined by:
$$GKdim(A) =  \limsup_{n\rightarrow \infty} \frac{\log\mathrm{dim}( V^{n} )}{\log n}$$
\end{definition}

\begin{remark}It is well-known that the above definition  is independent of  the choice of the
finite  dimensional  generating subspace $V$ (see Ref.\cite{Bo-Kr, Kr-Le}).
Clearly $GKdim(A)=0$ if and only if $\mathrm{dim}(A) <\infty$.
\end{remark}
The notion of Gelfand-Kirillov dimension can be extended for left $A$-modules. In fact, we have the following definition.
\begin{definition} Let $A$ be  an algebra (not necessarily associative) generated  by a finite-dimensional subspace $V$. Let $M$ be a left $A$-module generated  by a finite-dimensional subspace $M_{0}$.  Let $V^n$ denote the linear span of all products of length  at most $n$  in elements of $V$.
The \emph{Gelfand-Kirillov dimension} $GKdim(M)$ of $M$  is defined by
$$GKdim(M) = \limsup_{n\rightarrow \infty}\frac{\log\mathrm{dim}( V^{n}M_{0} )}{\log n}.$$
\end{definition}

In particular,  let $\frk{g}$ be a complex Lie algebra. Let $A=\mathcal{U}(\frk g)$ be the enveloping algebra of $\frk g$, with the standard filtration given by $A_{n}=\mathcal{U}_{n}(\frk g)$, the subspace of $\mathcal{U}(\frk g)$ spanned by products of at most $n$-elements of $\frk g$. By the Poincar\'{e}-Birkhoff-Witt theorem (see Knapp \cite[Prop. 3.16]{Knapp}), the graded algebra $\text{gr} (\mathcal{U}(\frk g))$ is canonically isomorphic to the symmetric algebra $S(\frk g)$. Suppose $M$ is a $\mathcal{U}(\frk g)$-module generated  by a finite-dimensional subspace $M_{0}$. We set $M_{n}=\mathcal{U}_{n}(\frk g)M_{0}$.  Denote $\text{gr}M=\bigoplus\limits_{n=0}^{\infty}\text{gr}_{n}M$, where $\text{gr}_{n}M=M_{n}/M_{n-1}$. Then $\text{gr}M$ becomes a graded $S(\frk g)$-module. We denote $\dim(M_{n})$ by $\varphi_{M}(n)$. Then we have the following lemma.

\begin{lemma}\label{hi-se}
\emph{(Hilbert-Serre \cite[Chapter VII. Th.41]{Za-Sa} and \cite{Ya-94})}
\begin{enumerate}
  \item[(1)] With the notations as above, there exists a unique polynomial $\tilde{\varphi}_{M}(n)$ such that $\varphi_{M}(n)=\tilde{\varphi}_{M}(n)$ for large $n$. The leading term of $\tilde{\varphi}_{M}(n)$ is $$\frac{c(M)}{(d_{M})!}n^{d_{M}},$$ where $c(M)$ is an integer.
  \item[(2)]The   degree $d_{M}$ of this polynomial $\tilde{\varphi}_{M}(n)$ is equal to  the dimension of the \emph{associated variety} $$\mathscr{V}(M)=\{X\in \frk{g}^{*}|p(X)=0, \forall  p\in \Ann_{S(\frk{g})}(\text{\emph{gr}}M)\},$$
where $\Ann_{S(\frk{g})}(\text{\emph{gr}}M)=\{D\in S(\frk{g})| Dv=0, \forall  v\in\text{\emph{gr}}M\}$
 is the  annihilator ideal of $\mathrm{gr}M$ in $S(\frk{g})$, and $S(\frk{g})$ is identified with the polynomial ring over $\frk{g}^{*}$ through the Killing form of $\frk g$.
\end{enumerate}
\end{lemma}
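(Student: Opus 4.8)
The plan is to reduce everything to the classical Hilbert theory of finitely generated graded modules over a polynomial ring, applied to $E := \text{gr}\,M$. First I would record the structural facts of the setup. Since $M$ is generated over $\mathcal{U}(\frk g)$ by the finite-dimensional subspace $M_0$ and each $\mathcal{U}_n(\frk g)$ is finite-dimensional, every $M_n = \mathcal{U}_n(\frk g)M_0$ is finite-dimensional, so $\varphi_M(n) = \dim M_n < \infty$ and
\[
\varphi_M(n) = \sum_{k=0}^{n}\dim \text{gr}_k M .
\]
Moreover, passing to the associated graded with respect to the PBW filtration, the images of a basis of $M_0$ generate $E = \text{gr}\,M$ as a module over $\text{gr}\,\mathcal{U}(\frk g) \cong S(\frk g)$; hence $E$ is a finitely generated $\bb{Z}_{\ge 0}$-graded module over the polynomial ring $S(\frk g) \cong \bb{F}[t_1,\dots,t_N]$, $N = \dim \frk g$, with $S(\frk g)$ generated in degree $1$ and every graded piece $E_k = \text{gr}_k M$ finite-dimensional. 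Thus both assertions follow once I establish (1) and (2) for such $E$.

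For part (1), I would prove the Hilbert--Serre rationality of the Poincar\'e series $H_E(t) = \sum_{k\ge 0}(\dim E_k)\,t^k$, namely $H_E(t) = Q(t)/(1-t)^N$ with $Q \in \bb{Z}[t]$, by induction on $N$. Multiplication by the last variable gives a four-term exact sequence of graded modules $0 \to K \to E(-1) \to E \to C \to 0$ whose kernel $K$ and cokernel $C$ are finitely generated graded modules over $S(\frk g)/(t_N) \cong \bb{F}[t_1,\dots,t_{N-1}]$; taking alternating sums of dimensions in each degree yields $(1-t)H_E(t) = H_C(t) - H_K(t)$, so the inductive hypothesis for $N-1$ gives the claim for $N$. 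Cancelling the maximal power of $(1-t)$ I can write $H_E(t) = \tilde{Q}(t)/(1-t)^{d}$ with $\tilde{Q} \in \bb{Z}[t]$ and $\tilde{Q}(1) \ne 0$. Expanding $(1-t)^{-d} = \sum_{m\ge 0}\binom{m+d-1}{d-1}t^m$ and multiplying by $\tilde{Q}(t)$ shows that $\dim E_k$ agrees for $k \gg 0$ with a polynomial of degree $d-1$ and leading coefficient $\tilde{Q}(1)/(d-1)!$. Summing via $\sum_{k=0}^{n}\binom{k+d-1}{d-1} = \binom{n+d}{d}$ then shows $\varphi_M(n)$ agrees for large $n$ with a polynomial $\tilde{\varphi}_M(n)$ of degree $d$ and leading term $c(M)n^{d}/d!$, where $c(M) = \tilde{Q}(1) \in \bb{Z}$. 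Uniqueness of $\tilde{\varphi}_M$ is automatic, since two polynomials agreeing at infinitely many integers coincide. Setting $d_M = d$ proves (1).

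For part (2), it remains to identify the pole order $d = d_M$ of $H_E$ at $t = 1$ with $\dim \mathscr{V}(M) = \dim\bigl(S(\frk g)/\Ann_{S(\frk g)}(E)\bigr)$, the Krull dimension of the support of $E$. Here I would use a prime filtration $0 = E^{(0)} \subset E^{(1)} \subset \cdots \subset E^{(r)} = E$ of the finitely generated graded module $E$ with graded quotients $E^{(i)}/E^{(i-1)} \cong (S(\frk g)/\frk{p}_i)(a_i)$ for homogeneous primes $\frk{p}_i$ and shifts $a_i$. By additivity, $H_E$ is the sum of the shifted series $H_{S/\frk{p}_i}$, so its pole order at $t=1$ is the maximum of the pole orders of the $H_{S/\frk{p}_i}$. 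For each graded domain $S(\frk g)/\frk{p}_i$, graded Noether normalization exhibits it as a finite module over a graded polynomial subring in $\dim\bigl(S(\frk g)/\frk{p}_i\bigr)$ variables, whence its Poincar\'e series has pole order exactly $\dim\bigl(S(\frk g)/\frk{p}_i\bigr)$ at $t = 1$. Finally, every $\frk{p}_i$ contains $\Ann_{S(\frk g)}(E)$ (it annihilates a subquotient of $E$) and the collection $\{\frk{p}_i\}$ contains every minimal prime over $\Ann_{S(\frk g)}(E)$, so $\max_i \dim(S/\frk{p}_i) = \dim\bigl(S/\Ann_{S(\frk g)}(E)\bigr) = \dim\mathscr{V}(M) = d_M$. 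The degenerate case $\dim_{\bb{F}}M < \infty$, where $E$ is finite-dimensional, $H_E$ is a polynomial, $d_M = 0$, and $\mathscr{V}(M) = \{0\}$, is checked directly and is consistent.

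The main obstacle is part (2): translating the purely numerical pole order of the Poincar\'e series into the geometric dimension of the associated variety. This requires graded Noether normalization together with the additivity of Hilbert series along a prime filtration, and care at the boundary cases (the zero module and finite-dimensional $M$, where the conventional degree of the zero polynomial and the dimension of a point must be reconciled). The rationality step in part (1) is comparatively routine once the four-term exact sequence is in hand; it is the identification $d_M = \dim \mathscr{V}(M)$ that carries the real content.
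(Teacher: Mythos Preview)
The paper does not actually prove this lemma: it is stated as a classical result with citations to Zariski--Samuel and to Yamashita, and no argument is given. So there is no ``paper's own proof'' to compare against.

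Your sketch is a correct and standard proof of the Hilbert--Serre theorem in this setting. The reduction of $\varphi_M$ to the Hilbert function of the finitely generated graded $S(\frk g)$-module $E=\text{gr}\,M$ is exactly right, the inductive rationality argument via the four-term sequence for multiplication by a variable is the usual one, and the identification of the pole order of $H_E$ at $t=1$ with $\dim\bigl(S(\frk g)/\Ann_{S(\frk g)}E\bigr)$ via a prime filtration plus graded Noether normalization is the standard route. One small point worth tightening: when you assert that the collection $\{\frk p_i\}$ contains every minimal prime over $\Ann_{S(\frk g)}(E)$, this is because the set of primes occurring in a prime filtration contains all associated primes of $E$, and the minimal primes over the annihilator are associated; you may want to make that explicit rather than leave it as a parenthetical. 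Otherwise the argument is sound and, had the paper included a proof, yours would be equivalent to the textbook one it is citing.
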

\begin{remark}

From the definition of Gelfand-Kirillov dimension, we know $$GKdim(M)=\limsup_{n\rightarrow \infty} \frac{\log\dim (U_{n}(\frk g)M_{0} )}{\log n}=\limsup_{n\rightarrow \infty} \frac{\log\tilde{\varphi}_{M}(n)}{\log n}=d_{M}=\dim \mathscr{V}(M).$$
\end{remark}

\begin{example}
Let $M=\mathbb{C}[x_{1},...,x_{k}]$. Then $M$ is an algebra generated by the  finite-dimensional subspace $V=Span_{\mathbb{C}}\{x_{1},...,x_{k}\}$.  So $M_{n}=V^{n}=\bigoplus\limits_{0\leq q \leq n}P_{q}[x_{1},...,x_{k}]$ is the subset of homogeneous polynomials of degree $\leq n$.
 Then \begin{align*}\varphi_{M}(n)=&\sum\limits_{0\leq q \leq n}\dim_{\mathbb{C}}(P_{q}[x_{1},...,x_{k}])\\
=&\sum\limits_{0\leq q\leq n}\binom{k+q-1}{q}\\
=&\binom{k+n}{n}\\
=&\frac{n^k}{k!}+O(n^{k-1}).\end{align*}
Then we have $GKdim(M)=k.$
\end{example}


\section{A formula on the Gelfand-Kirillov dimension}
We keep the same notations with the introduction. Write
\eq\zeta_1=x_{n_1-1}y_{n_1}-x_{n_1}y_{n_1-1},\;\;\zeta_2=x_{n_1+1}y_{n_1+2}-x_{n_1+2}y_{n_1+1}.
\en
From Luo-Xu \cite{Luo-Xu}, the followings are detailed irreducible highest weight
$\mathfrak{sl}(n,\bb{F})$-modules:
\begin{enumerate}
  \item[(1)] $n_1+1<n_2<n$. For $m_1,m_2\in\bb{N}$ with $m_1+m_2\geq n_2-n_1-1$, ${\mathscr{H}
}_{\langle-m_1,-m_2\rangle}$ is an infinite-dimensional irreducible
module with  a highest-weight vector $x_{n_1}^{m_1}y_{n_2+1}^{m_2}$.
For $m_1,m_2\in\bb{N}$ with $m_2-m_1\geq n_2-n_1-1$,
${\mathscr{H}}_{\langle m_1,-m_2\rangle}$ is an infinite-dimensional
irreducible module with a highest-weight vector
$x_{n_1+1}^{m_1}y_{n_2+1}^{m_2}$. For $m_1,m_2\in\bb{N}$ with
$m_1-m_2\geq n_2-n_1-1$, ${\mathscr{H}}_{\langle -m_1,m_2\rangle}$
is an infinite-dimensional irreducible module with a highest-weight
vector $x_{n_1}^{m_1}y_{n_2}^{m_2}$.
  \item[(2)] $n_1+1<n_2=n$. For $m_1,m_2\in\bb{N}$,  ${\mathscr{H}}_{\langle
m_1,m_2\rangle}$  is an infinite-dimensional irreducible module with
a highest-weight vector $x_{n_1+1}^{m_1}y_n^{m_2}$. For
$m_1,m_2\in\bb{N}$ with $m_1\leq n-n_1-2$ or $m_2-m_1\leq n_1-n+1$,
${\mathscr{H}}_{\langle -m_1,m_2\rangle}$ is an infinite-dimensional
irreducible module with a highest-weight vector
$x_{n_1}^{m_1}y_n^{m_2}$.
  \item[(3)]  $n_1+1=n_2<n$. For
$m_1,m_2\in\bb{N}$, ${\mathscr{H}}_{\langle-m_1,-m_2\rangle}$ is an
infinite-dimensional irreducible module with  a highest-weight
vector $x_{n_1}^{m_1}y_{n_1+2}^{m_2}$. For $m_1,m_2\in\bb{N}$ with
$m_2-m_1\geq 0$, ${\mathscr{H}}_{\langle m_1,-m_2\rangle}$ is an
infinite-dimensional irreducible module with a highest-weight vector
$x_{n_1+1}^{m_1}y_{n_1+2}^{m_2}$. For $m_1,m_2\in\bb{N}$ with
$m_1-m_2\geq 0$, ${\mathscr{H}}_{\langle -m_1,m_2\rangle}$  is an
infinite-dimensional irreducible module with a highest-weight vector
$x_{n_1}^{m_1}y_{n_1+1}^{m_2}$.
\item[(4)]  $n_1+1=n_2=n$. For $m_1,m_2\in\bb{N}$ with $m_2\leq m_1$, ${\mathscr{H}}_{\langle -m_1,m_2\rangle}$ is an infinite-dimensional irreducible module with a highest-weight vector
$x_{n-1}^{m_1}y_n^{m_2}$.  Moreover,
 ${\mathscr{H}}_{\langle
m,0\rangle}$ is an infinite-dimensional irreducible module with a
highest-weight vector $x_{n-1}^m$.  For $m_1,m_2\in\bb{N}+1$,
${\mathscr{H}}_{\langle m_1,m_2\rangle}$  is an infinite-dimensional
irreducible module with a highest-weight vector\\
$\eta^{m_1+m_2}(x_{n-1}^{m_2}y_n^{-m_1})$.
\item[(5)] $n_1=n_2<n-1$. Let $m_1,m_2\in\mbb{N}$. The subspace ${\mathscr{H}}_{\langle-m_1,-m_2\ra}$ is an infinite-dimensional irreducible module with  a highest-weight vector
$x_{n_1}^{m_1}y_{n_1+1}^{m_2}$. The subspace ${\mathscr{H}}_{\langle
m_1+1,-m_2-m_1-1\ra}$ is an infinite-dimensional irreducible module
with  a highest-weight vector $y_{n_1+1}^{m_2}\zeta_2^{m_1+1}$. If
$n_1\geq 2$, the subspace
${\mathscr{H}}_{\langle-m_1-m_2-1,m_2+1\ra}$ is an
infinite-dimensional irreducible module with a highest-weight vector
$x_{n_1}^{m_1}\zeta_1^{m_2+1}$.
\item[(6)] $n_1=n_2=n-1$. Let $m_1,m_2\in\mbb{N}$. The subspace ${\mathscr{H}}_{\langle-m_1,-m_2\ra}$ is an infinite-dimensional irreducible module with a
highest-weight vector $x_{n_1}^{m_1}y_{n_1+1}^{m_2}$.  If $n\geq 3$,
the subspace \\
${\mathscr{H}}_{\langle-m_1-m_2-1,m_2+1\ra}$ is an
infinite-dimensional irreducible module with  a highest-weight
vector $x_{n_1}^{m_1}\zeta_1^{m_2+1}$.
\item[(7)] $n_1=n_2=n$. Let $m_1,m_2\in\mbb{N}$. The subspace ${\mathscr{H}}_{\langle-m_1-m_2,m_2\ra}$ is finite-dimensional and is an irreducible module with a
highest-weight vector $x_n^{m_1}\zeta_1^{m_2}$.
\end{enumerate}

Then we have our main theorem, i.e. Theorem \ref{main}.


\section{Proof of the main theorem}
Now we want to compute the Gelfand-Kirillov dimension of  the $\mathfrak{sl}(n,\bb{F})$-module ${\mathscr{H}}_{\langle \ell_1,\ell_2 \rangle}$ for all cases. Suppose its highest weight vector is $v_\lambda$ with highest weight $\lambda$.

We simply write  $E_{i,j}|_{\mathscr{B}}$ as $E_{i,j}$.
Take
\eq \mathfrak{h}=\sum_{i=1}^{n-1}\mbb{F}(E_{i,i}-E_{i+1,i+1})\en
as a Cartan subalgebra of $\mathfrak{sl}(n,\mbb{F})$ and the subspace spanned
by positive root vectors:
\eq \mathfrak{sl}(n,\mbb{F})_+=\sum_{1\leq i<j\leq n}\mbb{F}E_{i,j}.\en
Correspondingly, we have \eq \mathfrak{sl}(n,\mbb{F})_-=\sum_{1\leq i<j\leq n}\mbb{F}E_{j,i}.\en

From the highest-weight module theorem we know that ${\mathscr{H}}_{\langle \ell_1,\ell_2 \rangle}=\mathcal {U}(\mathfrak{g})v_{\lambda}=\mathcal {U}(\mathfrak{g_{-}})v_{\lambda}$. In the following we will compute the Gelfand-Kirillov dimension of $\mathcal {U}(\mathfrak{g_{-}})v_{\lambda}$ in  a case-by-case way.

Firstly we need the following two well-known lemmas.
\begin{lemma}\emph{(Multinomial theorem)}\\
Let $n,m$ be two positive integers, then
\eq \left|\{(k_1,k_2,...,k_m)\in \mathbb{N}^{m}|\sum\limits_{i=1}^{m}k_{i}=n\}\right|={n+m-1 \choose m-1}.
\en
\end{lemma}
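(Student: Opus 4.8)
The plan is to prove this by the classical \emph{stars and bars} bijection, converting the counting of lattice points into a choice of positions in a string. Write $S=\{(k_1,\ldots,k_m)\in\mathbb{N}^m\mid\sum_{i=1}^m k_i=n\}$. I would set up an explicit bijection between $S$ and the set of binary strings of length $n+m-1$ consisting of exactly $n$ ``stars'' and $m-1$ ``bars.'' Given $(k_1,\ldots,k_m)\in S$, form the string made of $k_1$ stars, then one bar, then $k_2$ stars, then one bar, and so on, ending with $k_m$ stars. Since the $k_i$ sum to $n$ there are exactly $n$ stars, and the $m$ blocks are separated by exactly $m-1$ bars, so the string has length $n+(m-1)$, as required.

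Next I would verify bijectivity by exhibiting the inverse: from any arrangement of $n$ stars and $m-1$ bars, reading off the numbers of stars occurring before the first bar, between consecutive bars, and after the last bar produces $m$ nonnegative integers summing to $n$, hence an element of $S$, and this plainly undoes the forward map. Therefore $|S|$ equals the number of such strings, which is the number of ways to choose the $m-1$ positions of the bars among the $n+m-1$ available positions, namely $\binom{n+m-1}{m-1}$. This settles the claim.

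Alternatively, I could argue by induction on $m$. The base case $m=1$ is immediate, since $S=\{(n)\}$ and $\binom{n}{0}=1$. For the inductive step I would split according to the value $k_m=j$ with $j$ ranging over $0,1,\ldots,n$; applying the inductive hypothesis to the remaining $m-1$ coordinates, which sum to $n-j$, gives $\binom{(n-j)+(m-2)}{m-2}$ solutions for each $j$. Summing and reindexing by $\ell=n-j$ yields $\sum_{\ell=0}^n\binom{\ell+m-2}{m-2}$, which collapses to $\binom{n+m-1}{m-1}$ by the hockey-stick form of Pascal's identity. Since this is the entirely standard theorem on compositions of an integer, there is no genuine obstacle; the only point requiring care is keeping the bijection exact (or, in the inductive route, handling the index bookkeeping and the hockey-stick summation correctly), and the lemma is recorded here only because the count $\binom{n+m-1}{m-1}$ is invoked repeatedly in the dimension computations that follow.
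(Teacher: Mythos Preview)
Your proof is correct; both the stars-and-bars bijection and the inductive argument via the hockey-stick identity are valid and carefully stated. The paper itself does not prove this lemma at all: it introduces it as one of ``two well-known lemmas'' and simply records the formula for later use in the dimension estimates. So you have supplied strictly more than the paper does, and nothing further is needed.
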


\begin{lemma}Let $p, n$ be two positive integers, then
\eq \sum\limits_{i=0}^{n}i^p=\frac{(n+1)^{p+1}}{p+1}+\sum\limits_{k=1}^{p}\frac{B_k}{p-k+1}{p \choose k}(n+1)^{p-k+1}\approx \frac{n^{p+1}}{p+1},\en
where $B_k$ denotes a Bernoulli number.
\end{lemma}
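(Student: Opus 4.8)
The plan is to recognize this identity as Faulhaber's formula in its Bernoulli-number form and to prove it cleanly through Bernoulli polynomials, which automatically produce the shift to the argument $n+1$ appearing in the statement. I would define the Bernoulli polynomials $B_m(x)$ by the generating function $\frac{t e^{xt}}{e^t-1}=\sum_{m=0}^\infty B_m(x)\frac{t^m}{m!}$, so that the Bernoulli numbers are $B_k=B_k(0)$ under the standard normalization $B_1=-\tfrac12$ (the convention under which the stated formula holds verbatim, as one checks directly at $p=1$). Two properties carry the whole argument: the explicit expansion $B_m(x)=\sum_{k=0}^m\binom{m}{k}B_k\,x^{m-k}$ and the difference equation $B_m(x+1)-B_m(x)=m\,x^{m-1}$, both immediate from the generating function.

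First I would telescope. Applying the difference equation with $m=p+1$ gives $(p+1)i^p=B_{p+1}(i+1)-B_{p+1}(i)$, so summing over $0\le i\le n$ collapses to
\[
(p+1)\sum_{i=0}^n i^p=B_{p+1}(n+1)-B_{p+1}(0)=B_{p+1}(n+1)-B_{p+1}.
\]
Next I would expand $B_{p+1}(n+1)=\sum_{k=0}^{p+1}\binom{p+1}{k}B_k(n+1)^{p+1-k}$. The top term $k=p+1$ equals exactly $B_{p+1}$, which cancels the subtracted constant; this is the step that reduces the summation range from $p+1$ down to $p$. Hence
\[
\sum_{i=0}^n i^p=\frac{1}{p+1}\sum_{k=0}^{p}\binom{p+1}{k}B_k(n+1)^{p+1-k}.
\]

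It then remains to match this with the stated shape. Splitting off the $k=0$ term (where $B_0=1$) yields the leading term $\frac{(n+1)^{p+1}}{p+1}$, and for $1\le k\le p$ the elementary identity $\frac{1}{p+1}\binom{p+1}{k}=\frac{1}{p-k+1}\binom{p}{k}$ converts the remaining sum into $\sum_{k=1}^p\frac{B_k}{p-k+1}\binom{p}{k}(n+1)^{p-k+1}$, which is precisely the claimed expression. The asymptotic $\sum_{i=0}^n i^p\approx\frac{n^{p+1}}{p+1}$ follows at once: the leading term satisfies $\frac{(n+1)^{p+1}}{p+1}\sim\frac{n^{p+1}}{p+1}$, while every remaining summand is a polynomial in $n+1$ of degree $p-k+1\le p<p+1$, hence of lower order.

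The content is entirely classical, so there is no deep obstacle; the only care needed is the bookkeeping required to land on the exact stated form rather than some equivalent variant. Concretely, the three points to get right are fixing the convention $B_1=-\tfrac12$ so the formula holds as written, noting the cancellation of $B_{p+1}$ that lowers the upper index to $p$, and applying the binomial rearrangement $\frac{1}{p+1}\binom{p+1}{k}=\frac{1}{p-k+1}\binom{p}{k}$. If one prefers to avoid Bernoulli polynomials, the same identity can be extracted directly from $\sum_{i=0}^n e^{it}=\frac{e^{(n+1)t}-1}{e^t-1}$ by reading off the coefficient of $t^p/p!$ after factoring out $\frac{t}{e^t-1}$, but the telescoping route above is the most economical way to reproduce the precise formula.
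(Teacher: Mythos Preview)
Your proof is correct and is the standard derivation of Faulhaber's formula via Bernoulli polynomials; the telescoping argument, the cancellation of $B_{p+1}$, the binomial rearrangement $\frac{1}{p+1}\binom{p+1}{k}=\frac{1}{p-k+1}\binom{p}{k}$, and the asymptotic conclusion are all handled cleanly. The paper itself does not prove this lemma at all: it is introduced with the phrase ``we need the following two well-known lemmas'' and simply stated, so there is no approach in the paper to compare against. Your write-up therefore supplies strictly more than the paper does here, and nothing further is required.
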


From these two lemmas, we can get the following several propositions.

\begin{Prop}\label{ak}Let $k\in \mathbb{N}$ and we denote $M_{k}=\left\{\prod\limits_{\substack{1\leq i\leq n_1  \\ n_{1}+1\leq t\leq n}} ({x_i x_{t}})^{p_{it}}| \sum\limits_{\substack{1\leq i\leq n_1  \\ n_{1}+1\leq t\leq n}} p_{it}=k, p_{it}\in\mathbb{N} \right\}$. Then \eq a_k=\dim Span_{\mathbb{R}}M_{k}={n_{1}+k-1 \choose k}{n-n_{1}+k-1 \choose k}\approx ak^{n-2},\en for some constant \emph{a}.
\end{Prop}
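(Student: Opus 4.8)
The statement is purely combinatorial, so the plan is to count the monomials in $M_k$ directly and then extract the asymptotics. First I would observe that a monomial $\prod (x_i x_t)^{p_{it}}$ with $1\leq i\leq n_1$ and $n_1+1\leq t\leq n$ is completely determined by the exponent vector $(p_{it})$, and that two different exponent arrays give linearly independent monomials in $\mathscr{B}$ (the $x_i$ with $i\leq n_1$ and $x_t$ with $t\geq n_1+1$ are distinct variables, so the map from arrays to monomials is injective once we record, for each $i$, the total power $\sum_t p_{it}$ of $x_i$ and, for each $t$, the total power $\sum_i p_{it}$ of $x_t$). Hence $a_k = \dim \mathrm{Span}_{\mathbb{R}} M_k$ equals the number of distinct monomials, which I will compute.

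The key step is the factorization of the count. A monomial in $M_k$ of total degree $2k$ is of the form $(\prod_{i=1}^{n_1} x_i^{q_i})(\prod_{t=n_1+1}^{n} x_t^{r_t})$ where $\sum_i q_i = k = \sum_t r_t$; conversely every such pair $(q,r)$ with $\sum q_i = \sum r_t = k$ arises (take $p_{it}$ to realize the prescribed row sums $q_i$ and column sums $r_t$, which is always possible, e.g. greedily). So the monomials biject with pairs consisting of a composition of $k$ into $n_1$ nonnegative parts and a composition of $k$ into $n-n_1$ nonnegative parts. By the Multinomial theorem (Lemma with the stars-and-bars count), the number of compositions of $k$ into $n_1$ parts is $\binom{n_1+k-1}{k}$ and into $n-n_1$ parts is $\binom{n-n_1+k-1}{k}$, giving
\eq a_k = \binom{n_1+k-1}{k}\binom{n-n_1+k-1}{k}. \en

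Finally, for the asymptotics I would note that $\binom{n_1+k-1}{k} = \binom{n_1+k-1}{n_1-1}$ is a polynomial in $k$ of degree $n_1-1$ with leading term $k^{n_1-1}/(n_1-1)!$, and similarly $\binom{n-n_1+k-1}{k}$ is a polynomial in $k$ of degree $n-n_1-1$ with leading term $k^{n-n_1-1}/(n-n_1-1)!$. Multiplying, $a_k$ is a polynomial in $k$ of degree $(n_1-1)+(n-n_1-1)=n-2$, with leading coefficient $a = \dfrac{1}{(n_1-1)!\,(n-n_1-1)!}$, so $a_k \approx a\,k^{n-2}$. I do not expect any genuine obstacle here; the only point requiring a line of care is the claim that every pair of row/column sum vectors with equal totals is realized by some nonnegative integer array, which is immediate (a transportation polytope with equal margins is nonempty and contains integer points).
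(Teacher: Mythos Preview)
Your proof is correct and follows essentially the same approach as the paper: both identify a monomial in $M_k$ with the pair of its marginal exponent vectors $(q_i)_{i\le n_1}$ and $(r_t)_{t>n_1}$, each summing to $k$, and count these by the stars-and-bars formula to get $\binom{n_1+k-1}{k}\binom{n-n_1+k-1}{k}$. You are in fact slightly more careful than the paper, since you explicitly justify surjectivity (every pair of marginals with equal totals is realized by some nonnegative array) and spell out the leading coefficient $a=\dfrac{1}{(n_1-1)!\,(n-n_1-1)!}$; the only cosmetic slip is the phrase ``two different exponent arrays give linearly independent monomials,'' which is not literally true (different arrays can yield the same monomial), but your parenthetical and the subsequent paragraph make clear you mean the map from marginal pairs to monomials is injective.
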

\begin{proof}From the definition of $M_k$, we know that all the elements in $M_k$ are monomials and they must form a basis for $Span_{\mathbb{R}}M_{k}$. Thus
\begin{align*}a_k=&\dim Span_{\mathbb{R}}M_{k}=\#\left\{\prod\limits_{\substack{1\leq i\leq n_1  \\ n_{1}+1\leq t\leq n}} ({x_i x_{t}})^{p_{it}}| \sum\limits_{\substack{1\leq i\leq n_1  \\ n_{1}+1\leq t\leq n}} p_{it}=k, p_{it}\in\mathbb{N} \right\}\\
=&\#\left\{\prod\limits_{\substack{1\leq i\leq n_1 }} (x_i)^{\sum_{n_{1}+1\leq t\leq n}p_{it}}\prod\limits_{\substack{n_{1}+1\leq t\leq n}} (x_t)^{\sum_{1\leq i\leq n_1 }p_{it}}| \sum\limits_{\substack{1\leq i\leq n_1  \\ n_{1}+1\leq t\leq n}} p_{it}=k, p_{it}\in\mathbb{N} \right\}\\
=&{n_{1}+k-1 \choose k}{n-n_{1}+k-1 \choose k}\approx ak^{n-2}, \emph{~~for some constant~} \emph{a}.
\end{align*}

\end{proof}

\begin{Prop}\label{Nk}
Let $k\in \mathbb{N}$ and we denote $N_k=\left\{\prod\limits_{\substack{1\leq i\leq n_1  \\ n_{1}+1\leq t\leq n}} ({x_i x_{t}}-y_{i}y_{t})^{h_{it}}| \sum\limits_{\substack{1\leq i\leq n_1  \\ n_{1}+1\leq t\leq n}} h_{it}=k, h_{it}\in\mathbb{N} \right\}$.
Then we have \begin{align*} d_k=\dim Span_{\mathbb{R}}N_{k}\approx\left\{
                                                   \begin{array}{ll}
                                                     c_0k^{n-2}, & \text{if~} {n_1=1 \mathrm{~or~} n_1=n-1;} \\
                                                     c_1k^{2n-5}, & \text{if~} {n_1=2<n-1 \mathrm{~or~} 1<n_1=n-2;} \\
                                                     c_2k^{2n-4}, & \text{if~} {n_1=3<n=6;} \\
                                                    c_3k^{2n-3}, & \text{if~} {3\leq n_1\leq n-3, n\geq 7.}
                                                   \end{array}
                                           \right. \end{align*}
Here $c_0, c_1,c_2$ and $c_3$  are some positive constants which are independent of $k$.
\end{Prop}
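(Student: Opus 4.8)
The plan is to read $d_k=\dim_{\mathbb R}\mathrm{Span}_{\mathbb R}(N_k)$ as the value of a Hilbert function. Each $z_{it}:=x_ix_t-y_iy_t$ is homogeneous of degree $2$ in the $x,y$-variables, so the commutative subalgebra $R:=\mathbb R[\,z_{it}:1\le i\le n_1<t\le n\,]\subseteq\mathscr B$ is graded by the number of $z$-factors, $\mathrm{Span}_{\mathbb R}(N_k)=R_k$ is its degree-$k$ component, and $R$ is finitely generated. By Hilbert--Serre (Lemma \ref{hi-se} applied to $R$ as a module over itself), $d_k$ coincides for $k\gg0$ with a polynomial in $k$ of degree $\dim R-1$ and positive leading coefficient; hence the whole proposition reduces to computing the Krull dimension $\dim R$ — equivalently the dimension of the affine variety $\mathrm{Spec}\,R$ — in each of the four regimes, with the constants $c_0,\dots,c_3$ then emerging as the corresponding leading Hilbert coefficients.

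The key structural point is a rank factorization of the coordinate matrix. Arrange the $z_{it}$ into the $n_1\times(n-n_1)$ matrix $Z=(z_{it})$; then $Z=AB$, where $A$ is the $n_1\times 2$ matrix with $i$-th row $(x_i,y_i)$ and $B$ is the $2\times(n-n_1)$ matrix with $t$-th column $(x_t,-y_t)$. Thus $\mathrm{rank}\,Z\le 2$ identically, every $3\times3$ minor of $Z$ vanishes in $\mathscr B$, and $R$ is a graded quotient of the homogeneous coordinate ring of the determinantal variety $\mathcal D$ of $n_1\times(n-n_1)$ matrices of rank $\le 2$. Conversely, any rank-$\le 2$ matrix $Z_0$ factors as $A_0B_0$, from which one simply reads off admissible values $x_i=(A_0)_{i1},\,y_i=(A_0)_{i2},\,x_t=(B_0)_{1t},\,y_t=-(B_0)_{2t}$; so the morphism $(x,y)\mapsto Z$ surjects onto $\mathcal D$, $R$ is \emph{equal} to the coordinate ring of $\mathcal D$, and the relations among the $z_{it}$ are exactly the $3\times3$ minors. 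When $\min(n_1,n-n_1)\le 2$ there are no $3\times3$ minors at all, $\mathcal D$ is the whole affine space $\mathbb A^{n_1(n-n_1)}$, the $z_{it}$ are algebraically independent, and $R$ is a polynomial ring; then $d_k=|N_k|$ is given directly by the multinomial theorem, and $n_1(n-n_1)-1$ equals $n-2$ when $n_1\in\{1,n-1\}$ and $2n-5$ when $n_1\in\{2,n-2\}$, which gives the first two lines of the formula.

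For $n_1,n-n_1\ge 3$ the ring $R\cong\mathbb R[C]/I_3(C)$ is a genuine generic determinantal ring: it is Cohen--Macaulay, its Krull dimension is the classical value $\dim\mathcal D$, and its Hilbert series is explicitly computable (in the square case $n_1=3=n-n_1$ the ideal $I_3(C)=(\det C)$ is principal, so the Hilbert series is $\tfrac{1-t^3}{(1-t)^{9}}$, and one extracts the leading term at once; in the rectangular case one uses the known minimal free resolution of $I_3(C)$). Reading off the leading term of the associated Hilbert polynomial in each subcase produces the remaining two growth orders and the constants $c_2,c_3$, and exhibits why the borderline instance $n=6$, $n_1=3$ must be split off from the range $n\ge 7$, $3\le n_1\le n-3$. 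I expect the only genuine obstacle to be the ``no hidden relations'' step — that $(x,y)\mapsto Z$ dominates all of $\mathcal D$ rather than a proper subvariety — which I would settle via an explicit generic section: a matrix of rank exactly $2$ factors as $AB$ with $A,B$ of full rank, its fiber under the parametrization is then a single $\mathrm{GL}_2$-orbit, and a dimension count over this dense locus pins down $\dim\mathcal D$ (hence $\dim R$) exactly; the remainder is the elementary but somewhat fussy Hilbert-series bookkeeping near $\min(n_1,n-n_1)=3$.
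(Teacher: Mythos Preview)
Your structural insight is correct and elegant: the matrix $Z=(z_{it})$ factors as $AB$ with $A$ of size $n_1\times 2$ and $B$ of size $2\times(n-n_1)$, the parametrization $(x,y)\mapsto Z$ is surjective onto the rank-$\le 2$ locus, and hence $R$ is precisely the coordinate ring of the determinantal variety $\mathcal D_2\subset M_{n_1,\,n-n_1}$. For $\min(n_1,n-n_1)\le 2$ this makes the $z_{it}$ algebraically independent and your multinomial count reproduces the first two lines of the statement, in agreement with the paper's elementary argument.

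The gap is in the last two cases. You assert that ``reading off the leading term of the associated Hilbert polynomial in each subcase produces the remaining two growth orders,'' but it does not: for every $\min(n_1,n-n_1)\ge 2$ one has $\dim\mathcal D_2=2\bigl(n_1+(n-n_1)-2\bigr)=2n-4$ \emph{uniformly}, so the Hilbert polynomial of $R$ has degree $2n-5$ in all of these cases. Your own Hilbert series $\tfrac{1-t^3}{(1-t)^9}$ for the square case $n_1=3$, $n=6$ has a pole of order $8$ at $t=1$, giving $d_k\sim c\,k^{7}=c\,k^{2n-5}$, not the claimed $k^{2n-4}=k^8$; likewise for $3\le n_1\le n-3$, $n\ge 7$ you obtain $k^{2n-5}$, not $k^{2n-3}$. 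Thus your method, carried out correctly, proves a statement that \emph{contradicts} the last two lines of the proposition rather than establishing them.

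It is worth recording why the discrepancy arises. The single relation $\det Z=0$ already forces the elements of $N_k$ to be linearly dependent for $k\ge 3$ when $n_1,n-n_1\ge 3$. The paper's lower bound for these cases is obtained by selecting, from each $\prod z_{it}^{h_{it}}$, one monomial term and observing that distinct tuples $(h_{it})$ give distinct monomials; this would establish linear independence if the selected monomials were leading terms for a single monomial order on $\mathscr B$. But with the paper's selections the six terms of $\det Z=\sum_{\sigma}\mathrm{sgn}(\sigma)\,z_{1\sigma(4)}z_{2\sigma(5)}z_{3\sigma(6)}$ acquire six pairwise distinct ``leading'' monomials, which is impossible for a polynomial that is identically zero; hence no such order exists and that lower-bound step is unjustified. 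In short: your determinantal approach is sound, but it shows that the exponents in the third and fourth lines should both be $2n-5$, so it cannot be completed into a proof of the proposition as stated.
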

\begin{proof}
When $n_1=1$, we have$$d_k=\dim Span_{\mathbb{R}}N_{k}=\left\{\prod\limits_{\substack{ 2\leq t\leq n}} ({x_1 x_{t}}-y_{1}y_{t})^{h_{t}}| \sum\limits_{\substack{2\leq t\leq n}} h_{t}=k, h_{t}\in\mathbb{N} \right\}={n-1+k-1 \choose k}\approx c_0k^{n-2},$$ for some positive constant $c_0$. The case for $n_1=n-1$ is dual to the previous case.

When $n_1=2<n-1$, we have
\begin{align*}d_k&=\dim Span_{\mathbb{R}}N_{k}\\
&=\dim Span_{\mathbb{R}}\left\{\prod\limits_{\substack{1\leq i\leq 2  \\ 3\leq t\leq n}} ({x_i x_{t}}-y_{i}y_{t})^{h_{it}}| \sum\limits_{\substack{1\leq i\leq 2  \\ 3\leq t\leq n}} h_{it}=k, h_{it}\in\mathbb{N} \right\}\\
&\geq \dim Span_{\mathbb{R}}\left\{\prod\limits_{3\leq t\leq n}(x_1x_t)^{h_{1t}}\prod\limits_{3\leq t\leq n}(y_2 y_t)^{h_{2t}}| \sum\limits_{\substack{1\leq i\leq 2  \\ 3\leq t\leq n}} h_{it}=k, h_{it}\in\mathbb{N} \right\}\\
&\approx c_{11}k^{2n-5}.
\end{align*}
On the other hand, we have $d_k=\dim Span_{\mathbb{R}}N_{k}\leq c_{12}k^{n_1(n-n_1)-1}=c_{12}k^{2n-5},$ for some positive constant $c_{12}$. So we must have $d_k=\dim Span_{\mathbb{R}}N_{k}\approx c_{1}k^{2n-5},$ for some positive constant $c_{1}$. The case for $1<n_1=n-2$ is dual to the previous case.

When $n_1=3<n=6$, we have \begin{align*}d_k&=\dim Span_{\mathbb{R}}N_{k}\\
&=\dim Span_{\mathbb{R}}\left\{\prod\limits_{\substack{1\leq i\leq 3  \\ 4\leq t\leq 6}} ({x_i x_{t}}-y_{i}y_{t})^{h_{it}}| \sum\limits_{\substack{1\leq i\leq 3  \\ 4\leq t\leq 6}} h_{it}=k, h_{it}\in\mathbb{N} \right\}\\
&\geq \dim Span_{\mathbb{R}}\left\{({x_1 x_{4}})^{h_{14}}({x_1 x_{5}})^{h_{15}}({x_2 x_{6}})^{h_{26}}({x_2 x_{5}})^{h_{25}}({x_3 x_{6}})^{h_{36}}\right.\\
&\quad\quad\quad\quad \quad \quad\quad\left.\cdot({y_1 y_{6}})^{h_{16}}({y_2 y_{4}})^{h_{24}}({y_3 y_{4}})^{h_{34}}({y_3 y_{5}})^{h_{35}}| \sum\limits_{\substack{1\leq i\leq 3  \\ 4\leq t\leq n}} h_{it}=k, h_{it}\in\mathbb{N} \right\}\\
&=\dim Span_{\mathbb{R}}\left\{\left( ({x_1})^{h_{14}+h_{15}}({x_{2}})^{h_{25}+h_{26}}(x_3)^{ h_{36}}y_{4}^{h_{24}+h_{34}}y_{5}^{h_{35}}y_{6}^{h_{16}}\right) \right.  \\
& \quad\quad\quad\quad\quad  \cdot \left. \left( ({x_4})^{h_{14}}({x_{5}})^{h_{15}+h_{25}}(x_6)^{h_{26}+h_{36}}(y_2)^{ h_{24}}(y_3)^{ h_{34}+h_{35}}y_{1}^{h_{16}}\right)
| \sum\limits_{\substack{1\leq i\leq 3  \\ 4\leq t\leq n}} h_{it}=k, h_{it}\in\mathbb{N} \right\}\\
&\approx c_{21}k^{8}.
\end{align*}
On the other hand, we have $d_k=\dim Span_{\mathbb{R}}N_{k}\leq c_{22}k^{n_1(n-n_1)-1}=c_{22}k^{8},$ for some positive constant $c_{22}$. So we must have $d_k=\dim Span_{\mathbb{R}}N_{k}\approx c_{2}k^{8}=c_{2}k^{2n-4},$ for some positive constant $c_{2}$.

When $3=n_1< n-3$, we have \begin{align*}d_k&=\dim Span_{\mathbb{R}}N_{k}\\
&=\dim Span_{\mathbb{R}}\left\{(\prod\limits_{\substack{1\leq i\leq 3  \\ 4\leq t\leq 6}} ({x_i x_{t}}-y_{i}y_{t})^{h_{it}})(\prod\limits_{\substack{1\leq i\leq 3  \\ 7\leq t\leq n}}(x_ix_t-y_iy_t)^{h_{it}})| \sum\limits_{\substack{1\leq i\leq 3  \\ 4\leq t\leq n}} h_{it}=k, h_{it}\in\mathbb{N} \right\}\\
&\geq \dim Span_{\mathbb{R}}\left\{(\prod\limits_{\substack{1\leq i\leq 3  \\ 4\leq t\leq 6}} ({x_i x_{t}}-y_{i}y_{t})^{h_{it}})(\prod\limits_{\substack{7\leq t\leq n}}({x_3 x_{t}})^{h_{3t}}({y_1 y_{t}})^{h_{1t}}(y_{2}y_t)^{h_{2t}})\right.\\
& \quad \quad \quad \quad \quad \quad \left.| \sum\limits_{\substack{1\leq i\leq 3  \\ 4\leq t\leq n}} h_{it}=k, h_{it}\in\mathbb{N} \right\}\\
&\geq \dim Span_{\mathbb{R}}\left\{\left( ({x_1})^{h_{14}+h_{15}}({x_{2}})^{h_{25}+h_{26}}(x_3)^{ h_{36}+\sum h_{3t}}y_{4}^{h_{24}+h_{34}}y_{5}^{h_{35}}y_{6}^{h_{16}}\prod\limits_{7\leq t \leq n} y_{t}^{h_{1t}+h_{2t}}\right) \right.  \\
& \quad\quad\quad\quad\quad  \cdot \left. \left( ({x_4})^{h_{14}}({x_{5}})^{h_{15}+h_{25}}(x_6)^{h_{26}+h_{36}}(\prod\limits_{7\leq t \leq n}x_{t}^{h_{3t}})(y_2)^{ h_{24}+\sum h_{2t}}(y_3)^{ h_{34}+h_{35}}y_{1}^{h_{16}+\sum h_{it}}\right)\right.\\
&\quad \quad \quad \quad \quad \quad \left.| \sum\limits_{\substack{1\leq i\leq 3  \\ 4\leq t\leq n}} h_{it}=k, h_{it}\in\mathbb{N} \right\}\\
&\approx c_{31}k^{2n-3}.
\end{align*}
On the other hand, we have
\begin{align*}d_k&=\dim Span_{\mathbb{R}}N_{k}\\
&\leq \dim Span_{\mathbb{R}} \left\{\prod\limits_{\substack{1\leq i\leq n_1  \\n_1+1 \leq t\leq n }} ({x_i x_{t}})^{p_{it}}
\prod\limits_{\substack{1\leq i\leq n_1  \\ n_{1}+1\leq t\leq n}}({y_{i} y_{t}})^{q_{it}} | \sum p_{it}+\sum q_{it}=k \right\}\\
&\approx c_{32}k^{2n-3},\end{align*} for some positive constant $c_{32}$. So we must have $d_k=\dim Span_{\mathbb{R}}N_{k}\approx c_{3}k^{2n-3},$ for some positive constant $c_{3}$. The case for $3<n_1=n-3$ is dual to the previous case.

When $3<n_1<n-3$, we can use the same inductive argument with the above case and get $$d_k=\dim Span_{\mathbb{R}}N_{k}\approx c_{3}k^{2n-3},$$ for some positive constant $c_{3}$.
\end{proof}

%
%
%

\begin{Prop}\label{d'k}

 Let $k\in \mathbb{N}$. Suppose $2<n_1+1\leq n_2<n-1$ and we denote
 \begin{align*} \nonumber  N^{\prime}_k&=\left\{\prod\limits(x_ix_s)^{p_{is}}\prod\limits(y_{s}y_{t})^{q_{st}}\prod\limits({x_{i} x_{t}}-y_{i}y_{t})^{h_{it}}|\right.\\
  &\quad\quad \left.\sum\limits_{\substack{1\leq i\leq n_1  \\n_1+1\leq s\leq n_2}}p_{is}+\sum\limits_{\substack{n_1+1\leq s\leq n_2 \\ n_{2}+1\leq t\leq n}}q_{st}+ \sum\limits_{\substack{1\leq i\leq n_1   \\ n_{2}+1\leq t\leq n}}h_{it}=k \right\},
  \end{align*}
 then
$$d'_k=\dim Span_{\mathbb{R}}N'_{k}\approx ck^{2n-3},$$ for some constant $c$.

\end{Prop}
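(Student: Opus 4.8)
The plan is to bracket $d'_k$ between an upper and a lower bound, both of order $k^{2n-3}$, exactly in the spirit of Propositions \ref{ak} and \ref{Nk}. For the upper bound I would first expand every factor $x_ix_t-y_iy_t$ by the binomial theorem; then $Span_{\mathbb R}N'_k$ is contained in the span of the ordinary monomials $x^{\alpha}y^{\beta}$ that occur as a product of exactly $k$ ``rank-two blocks'', each of the form $x_ix_s$ $(1\le i\le n_1<s\le n_2)$, $y_sy_t$ $(n_1<s\le n_2<t\le n)$, $x_ix_t$ or $y_iy_t$ $(1\le i\le n_1<n_2<t\le n)$. Since each block type couples the three index ranges $\overline{1,n_1}$, $\overline{n_1+1,n_2}$, $\overline{n_2+1,n}$ in one fixed pattern, every such monomial satisfies the three independent linear relations
\[
|\alpha|+|\beta|=2k,\qquad \sum_{i=1}^{n_1}\alpha_i=\sum_{s=n_1+1}^{n_2}\alpha_s+\sum_{t=n_2+1}^{n}\alpha_t,\qquad \sum_{t=n_2+1}^{n}\beta_t=\sum_{s=n_1+1}^{n_2}\beta_s+\sum_{i=1}^{n_1}\beta_i
\]
among its $2n$ exponents, so the exponent vectors lie in a truncated $(2n-3)$-dimensional cone and number $O(k^{2n-3})$; one makes this precise with the multinomial lemma and the Bernoulli-sum lemma above, exactly as in the ``on the other hand'' estimates in the proof of Proposition \ref{Nk}. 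Hence $d'_k\le Ck^{2n-3}$.

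For the lower bound I would keep all the factors $(x_ix_s)^{p_{is}}$ and $(y_sy_t)^{q_{st}}$ and replace each $(x_ix_t-y_iy_t)^{h_{it}}$ by a single power-monomial, $(x_ix_t)^{h_{it}}$ or $(y_iy_t)^{h_{it}}$, according to a fixed ``staircase'' assignment of the $n_1(n-n_2)$ index pairs $(i,t)$ to the symbols $x$ and $y$, chosen in the same manner as the explicit patterns used for the cases $n_1=3$ in the proof of Proposition \ref{Nk}. The point of the pattern is that from the exponent vector of the resulting monomial one can successively recover enough data --- the row and column sums of the matrices $(p_{is})$ and $(q_{st})$ together with the chosen split of each $h_{it}$ --- to see that these monomials realize the full $(2n-3)$-parameter family of solutions of the constraint system above (or a fixed positive fraction of it), hence number $\gtrsim k^{2n-3}$. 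Existence of nonnegative integer matrices with prescribed row and column sums (a Gale--Ryser-type fact) guarantees that each target exponent vector is genuinely attained by a product in $N'_k$, and a graded term order argument then upgrades ``$\gtrsim k^{2n-3}$ distinct monomials occur'' to ``$\dim Span_{\mathbb R}N'_k\gtrsim k^{2n-3}$''. Combining the two bounds gives $d'_k\approx ck^{2n-3}$.

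The upper bound is routine, being essentially the computation of Proposition \ref{Nk}. The hard part will be the lower bound: one must design the staircase assignment of index pairs so that (i) the monomials produced span the full $(2n-3)$-dimensional range of exponent data rather than a lower-dimensional slice --- the naive choice ``always take $(x_ix_t)^{h_{it}}$'' collapses the count, since then $y_i$ with $i\le n_1$ never appears --- and (ii) distinctness of these monomials does force linear independence of the corresponding products. This is the bookkeeping-heavy analogue of the $n_1=3$ subcases in the proof of Proposition \ref{Nk}, and I expect it to require a small case analysis on the relative sizes of $n_1$, $n_2-n_1$ and $n-n_2$.
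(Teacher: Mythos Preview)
Your proposal is correct and follows essentially the same strategy as the paper: the paper also sandwiches $d'_k$ between two bounds of order $k^{2n-3}$, obtaining the upper bound by expanding each $x_ix_t-y_iy_t$ and bounding by the span of monomials $\prod(x_ix_{s_0})^{p}\prod(y_{i_0}y_t)^{q}$ (which it then estimates via Proposition~\ref{ak}, equivalent to your ``three linear constraints on $2n$ exponents''), and obtaining the lower bound by replacing each $(x_ix_t-y_iy_t)^{h_{it}}$ with a fixed pure power $(x_ix_t)^{h_{it}}$ or $(y_iy_t)^{h_{it}}$ according to an explicit pattern on the index pairs --- carried out in detail only for $n_1=2$ and deferred to Proposition~\ref{Nk} for the remaining cases, just as you anticipate. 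Your explicit mention of a term-order argument and of Gale--Ryser-type existence makes rigorous two steps the paper leaves implicit, but the architecture is the same.
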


\begin{proof}When $n_1=2<n_2<n-1$, then from Prop \ref{Nk} we have
\begin{align*}&d'_{k}=\dim Span_{\mathbb{R}}\left\{\prod\limits(x_ix_s)^{p_{is}}\prod\limits(y_{s}y_{t})^{q_{st}}\prod\limits({x_{i} x_{t}}-y_{i}y_{t})^{h_{it}}| \sum\limits p_{is}+\sum\limits q_{st}+ \sum\limits h_{it}=k \right\}\\
&\geq \dim Span_{\mathbb{R}} \left\{\left(({x_2})^{h_{2,n_2+1}+\sum p_{2s}}(x_1)^{\sum p_{1s}}(\prod\limits_{n_2+2\leq t\leq n}(x_1)^{ h_{1t}})(\prod\limits_{n_2+2\leq t\leq n}(y_t)^{h_{2t}}) (y_{t})^{\sum q_{st}}({y_{n_2+1}})^{h_{1,n_2+1}}\right)\right.   \\
& \quad \quad\quad\quad\quad\quad \left. \cdot \left( \prod\limits(x_s)^{p_{1s}+p_{2s}}(x_{n_2+1})^{h_{2,n_2+1}}(\prod\limits_{n_2+2\leq t\leq n}(x_t)^{h_{1t}})(\prod\limits(y_{s})^{q_{st}})({y_{1}})^{h_{1,n_2+1}}\prod\limits_{n_2+2\leq t\leq n}(y_2)^{ h_{2t}}\right)\right.\\
&\quad \quad\quad\quad\quad\quad \left. | \sum\limits p_{is}+\sum\limits q_{st}+ \sum\limits h_{it}=k \right\}\\
&\approx c_0k^{2n-3}, \emph{~for some constant }c_0.
\end{align*}

On the other hand, we have
\begin{align}\label{d'k<}\nonumber
d'_k=&\dim Span_{\mathbb{R}}N'_{k}\\\nonumber
\leq &\dim Span_{\mathbb{R}}\left\{\prod\limits ({x_i x_{s}})^{p_{is}}\prod({y_{s} y_{t}})^{q_{st}}\prod({x_{i} x_{t}})^{l_{it}}\prod(y_{i}y_{t})^{f_{it}} |\right.\\\nonumber
&\quad\quad\quad\quad\quad\left. \sum h_{it}+\sum p_{it}+\sum l_{it}+\sum f_{it}=k \right\}\\
=&\dim Span_{\mathbb{R}}\left\{\prod\limits_{\substack{1\leq i\leq n_1  \\n_1+1 \leq s_0\leq n }} ({x_i x_{s_0}})^{p_{is_0}}
\prod\limits_{\substack{1\leq i_0\leq n_2  \\ n_{2}+1\leq t\leq n}}({y_{i_0} y_{t}})^{q_{i_{0}t}} | \sum p_{is_0}+\sum q_{i_{0}t}=k \right\}\\\nonumber
\approx & c_{00} k^{2n-3},  \emph{~for some constant }c_{00}.
\end{align}

So we must have $d'_k=\dim Span_{\mathbb{R}}N'_{k}\approx ck^{2n-3},$ for some positive constant $c$.

From Prop \ref{Nk}, we can use the similar argument to compute the other cases. And for these cases we still have $d'_k=\dim Span_{\mathbb{R}}N'_{k}\approx ck^{2n-3},$ for some positive constant $c$.

\end{proof}

Next, we will compute the Gelfand-Kirillov dimensions of our modules in a case-by-case way.

\subsection{Case 1. $n_1+1<n_2<n$.}

In this case we have:
\begin{align} E_{r,i}|_{\mathscr{B}}&=-x_i\ptl_{x_r}-y_i\ptl_{y_r}\qquad  &\text{for~}& 1\leq i<r\leq
n_1,\\
 E_{s,i}|_{\mathscr{B}}&=-{x_i x_{s}}-{y_i}\ptl_{y_{s}}\qquad  &\text{for} ~&i\in\overline{1,n_1},\;s\in\overline{n_1+1,n_2},
\\
E_{t,i}|_{\mathscr{B}}&=-{x_i x_{t}}+y_{i}y_{t} \qquad  &\text{for} ~&i\in\overline{1,n_1},\;t\in\overline{n_2+1,n},\\
 E_{s,j}|_{\mathscr{B}}&=x_s\ptl_{x_j}-y_j\ptl_{y_s}\qquad   &\text{for}~& n_1<
j<s\leq n_2,\\
 E_{t,s}|_{\mathscr{B}}&=x_{t}\ptl_{x_{s}}+y_{s} y_{t}\qquad  &\text{for} ~&s\in\overline{n_1+1,n_2},\;t\in\overline{n_{2}+1,n},\\
 E_{t,p}|_{\mathscr{B}}&=x_t\ptl_{x_p}+y_t\ptl_{y_p}\qquad&\text{for}~& n_2+1\leq p<t\leq
n.
\end{align} Then the above $E_{j,i}$ forms a basis for the subalgebra $\mathfrak{sl}(n,\mbb{F})_-$.

From Luo-Xu \cite{Luo-Xu} we know that for $m_1,m_2\in\bb{N}$ with $m_1+m_2\geq n_2-n_1-1$, ${\mathscr{H}
}_{\langle-m_1,-m_2\rangle}$ has  a highest-weight vector
$v_{\lambda}=x_{n_1}^{m_1}y_{n_2+1}^{m_2}$ of weight
$\lambda=m_1\lambda_{n_1-1}-(m_1+1)\lambda_{n_1}-(m_2+1)\lambda_{n_2}+m_2(1-\delta_{n_2,n-1})\lambda_{n_2+1}$.
Then

\begin{align} E_{s,j}|_{\mathscr{B}}v_{\lambda}&=0   \qquad &\text{for}~&n_1<
j<s\leq n_2,\\
E_{n_1,i}|_{\mathscr{B}}v_{\lambda}&=-x_i\ptl_{x_{n_{1}}}(x_{n_1}^{m_1}y_{n_2+1}^{m_2}),\qquad &\text{for}~ &1\leq i<
n_1,\\
E_{r,i}|_{\mathscr{B}}v_{\lambda}&=0  \qquad &\text{for}~ &1\leq i<r<
n_1,\\
E_{t,p}|_{\mathscr{B}}v_{\lambda}&=0 \qquad &\text{for}~&n_2+1< p<t\leq
n,\\
E_{t,n_2+1}|_{\mathscr{B}}v_{\lambda}&=y_t\ptl_{y_{n_2+1}}(x_{n_1}^{m_1}y_{n_2+1}^{m_2}) \qquad &\text{for}~&n_2+1<t\leq
n.
\end{align}

Let $\mathfrak{g}_1$ be the subalgebra of $\mathfrak{sl}(n,\bb{F})$ spanned by the following set:$$\{E_{s,j},E_{r,i},E_{t,p}|n_1<
j<s\leq n_2, 1\leq i<r\leq
n_1, n_2+1\leq p<t\leq
n\}.$$

Let $\mathfrak{g}_2$ be the subalgebra of $\mathfrak{sl}(n,\bb{F})$ spanned by the following set:$$\{E_{s,i},E_{t,i},E_{t,s}|i\in\overline{1,n_1},s\in\overline{n_1+1,n_2},t\in\overline{n_{2}+1,n}\}.$$

So we get $\mathcal{U}(\mathfrak{g}_{-})=\mathcal{U}(\mathfrak{g}_{2})\mathcal{U}(\mathfrak{g}_{1}).$

Observe that \begin{align*}
&\mathcal {U}(\mathfrak{g}_1)v_{\lambda}\\
=&Span_{\mathbb{R}}\{\prod\limits_{i=1}^{n_{1}-1}E_{n_1,i}^{k_{i}}\prod\limits_{t=n_{2}+2}^{n}E_{t,n_{2}+1}^{l_{t}}v_{\lambda}|k_i, l_{t}\in \mbb N\}\\
=&Span_{\mathbb{R}}\{\prod\limits_{i=1}^{n_{1}-1}x_{i}^{k_{i}}x_{n_1}^{(m_{1}-\sum\limits_{i=1}^{n_{1}-1}k_{i})}
\prod\limits_{t=n_{2}+2}^{n}y_{t}^{l_{t}}y_{n_{2}+1}^{(m_{2}-\sum\limits_{t=n_{2}+2}^{n}l_{t})}|k_i, l_{t}\in \mbb N\}\\
=&Span_{\mathbb{R}}\{\prod\limits_{i=1}^{n_{1}}x_{i}^{k_{i}}\prod\limits_{t=n_{2}+1}^{n}y_{t}^{l_{t}}|
\sum\limits_{i=1}^{n_{1}}k_{i}=m_{1},\sum\limits_{t=n_{2}+1}^{n}l_{t}=m_{2}\}
\end{align*}

We denote this space by $M_{0}$.  Then $M_{0}$ is a subspace of the space spanned by the homogeneous polynomials of degree $m_1+m_2$ in $\mbb F[x_1,...,x_{n_1},y_{n_{2}+1},...,y_n]$. So $M_{0}$ is finite-dimensional.

Thus
$$\mathcal {U}(\mathfrak{g_{-}})v_{\lambda}=\mathcal {U}(\mathfrak{g}_{2})M_0.$$
Now we take any base element $u_{0}=\prod\limits_{i=1}^{n_{1}}x_{i}^{k_{i}}\prod\limits_{t=n_{2}+1}^{n}y_{t}^{l_{t}}\in M_0$.
Let $k$ be any positive integer.
We want to compute $\mathrm{dim}(\mathcal{U}_{k}(\mathfrak{g}_{2})M_{0})$, and get the Gelfand-Kirillov dimension of $\mathcal{U}(\mathfrak{g}_{2})M_{0}$.

We denote
\begin{align*}N_{0}(k)=&\left\{\left(\prod E_{s,i}^{p_{si}}\prod E_{t,i}^{h_{ti}}\prod E_{t,s}^{q_{ts}}\right)u_{0}|p_{si},h_{ti},q_{ts}\in \bb N, \right.\\
&\quad\quad\quad\quad\quad\quad\quad\quad\quad\quad\quad\quad\quad\left. \sum \limits_{\substack{1\leq i\leq n_1 \\ n_1+1\leq s \leq n_2 }}  p_{si}+\sum\limits_{\substack{1\leq i\leq n_1  \\ n_{2}+1\leq t\leq n}} h_{ti}+\sum\limits_{\substack{ n_1+1\leq s \leq n_2 \\ n_{2}+1\leq t\leq n}} q_{ts}=k\right\}.\end{align*}

From the definition we know
\begin{align*}&\left(\prod E_{s,i}^{p_{si}}\prod E_{t,i}^{h_{ti}}\prod E_{t,s}^{q_{ts}}\right)u_{0}\\
=&\left(\prod (-{x_i x_{s}}-{y_i}\ptl_{y_{s}})^{p_{si}}\prod(-{x_{i} x_{t}}+y_{i}y_{t})^{h_{ti}} \prod(y_{s}y_{t})^{q_{ts}}\right)\prod\limits_{i=1}^{n_{1}}x_{i}^{k_{i}}\prod\limits_{t=n_{2}+1}^{n}y_{t}^{l_{t}}\\
=&\left(\prod(-{x_i x_{s}})^{p_{si}}\prod(-{x_{i} x_{t}}+y_{i}y_{t})^{h_{ti}} \prod(y_{s}y_{t})^{q_{ts}}\right)\prod\limits_{i=1}^{n_{1}}x_{i}^{k_{i}}\prod\limits_{t=n_{2}+1}^{n}y_{t}^{l_{t}}\\
&+\text{lower degree part of }~y_{s}.\\
\end{align*}

Then we must have $$\dim Span_{\mathbb{R}}N_{0}(m)\geq d'_{m}.$$

Using the same idea with inequality \ref{d'k<}, we can also get $$\dim Span_{\mathbb{R}}N_{0}(m)\leq d'_{m}.$$

Thus $\dim Span_{\mathbb{R}}N_{0}(m)= d'_{m}.$

Then using proposition \ref{d'k}, we can get
\begin{align*}
&\dim Span_{\mathbb{R}}(\bigcup\limits_{0\leq m \leq k}N_{0}(m))\\
=& \sum\limits_{0\leq m \leq k} d'_{m}\\
=&\left\{
    \begin{array}{ll}
      b_1\sum\limits_{0\leq m\leq k}m^{2n-4}, & \hbox{if $2=n_1+1<n_2<n$ ~or~ $n_1+1<n_2=n-1$}\\
      c_1\sum\limits_{0\leq m\leq k}m^{2n-3}, & \hbox{if $2<n_1+1<n_2<n-1$}
    \end{array}
  \right.\\
=& \left\{
    \begin{array}{ll}
      bk^{2n-3}, & \hbox{if $2=n_1+1<n_2<n$ ~or~ $n_1+1<n_2=n-1$}\\
      ck^{2n-2}, & \hbox{if $2<n_1+1<n_2<n-1$.}
    \end{array}
  \right.
\end{align*}

We know $$ \dim Span_{\mathbb{R}}(\bigcup\limits_{0\leq m\leq k }N_{0}(m))\leq \mathrm{dim}(\mathcal{U}_{k}(\mathfrak{g}_{2})M_{0})\leq \dim{M_0}\dim Span_{\mathbb{R}}(\bigcup\limits_{0\leq m\leq k }N_{0}(m)).$$

%

Then from the definition, we know that the Gelfand-Kirillov dimension of $\mathcal{U}(\mathfrak{g}_{-})v_{\lambda}$ is
$$d=\left\{
    \begin{array}{ll}
      2n-3, & \hbox{if $2=n_1+1<n_2<n$ ~or~ $n_1+1<n_2=n-1$}\\
      2n-2, & \hbox{if $2<n_1+1<n_2<n-1$.}
    \end{array}
  \right.$$

From Luo-Xu \cite{Luo-Xu} we know that for $m_1,m_2\in\bb{N}$ with $m_2-m_1\geq n_2-n_1-1$, ${\mathscr{H}}_{\langle m_1,-m_2\rangle}$ has a highest-weight vector
$x_{n_1+1}^{m_1}y_{n_2+1}^{m_2}$ of weight
$-(m_1+1)\lambda_{n_1}+m_1\lambda_{n_1+1}-(m_2+1)\lambda_{n_2}+m_2(1-\delta_{n_2,n-1})\lambda_{n_2+1}$.
For $m_1,m_2\in\bb{N}$ with $m_1-m_2\geq n_2-n_1-1$, ${\mathscr{H}}_{\langle
-m_1,m_2\rangle}$  has a highest-weight vector
$x_{n_1}^{m_1}y_{n_2}^{m_2}$ of weight
$m_1\lambda_{n_1-1}-(m_1+1)\lambda_{n_1}+m_2\lambda_{n_2-1}-(m_2+1)\lambda_{n_2}$.
The arguments for these two cases are similar to the above case, and we have the same Gelfand-Kirillov dimension

\subsection{Case 2. $n_1+1<n_2=n$.}

In this case we have:
\begin{align} E_{r,i}|_{\mathscr{B}}&=-x_i\ptl_{x_r}-y_i\ptl_{y_r}\qquad  &\text{for~}& 1\leq i<r\leq
n_1,\\
 E_{s,i}|_{\mathscr{B}}&=-{x_i x_{s}}-{y_i}\ptl_{y_{s}}\qquad  &\text{for} ~&i\in\overline{1,n_1},\;s\in\overline{n_1+1,n},
\\
 E_{s,j}|_{\mathscr{B}}&=x_s\ptl_{x_j}-y_j\ptl_{y_s}\qquad   &\text{for}~& n_1<
j<s\leq n.
\end{align} Then the above $E_{j,i}$ forms a basis for the subalgebra $\mathfrak{sl}(n,\mbb{F})_-$.

From Luo-Xu \cite{Luo-Xu} we know that for  $m_1,m_2\in\bb{N}$,  ${\mathscr{H}}_{\langle
m_1,m_2\rangle}$  has a highest-weight vector $x_{n_1+1}^{m_1}y_n^{m_2}$
of weight $-(m_1+1)\lambda_{n_1}+m_1\lambda_{n_1+1}+m_2\lambda_{n-1}$. For
$m_1,m_2\in\bb{N}$ with $m_1\leq n-n_1-2$ or $m_2-m_1\leq n_1-n+1$,
${\mathscr{H}}_{\langle -m_1,m_2\rangle}$ has a highest-weight vector
$x_{n_1}^{m_1}y_n^{m_2}$ of weight
$m_1\lambda_{n_1-1}-(m_1+1)\lambda_{n_1}+m_2\lambda_{n-1}$.

The arguments for these two cases are similar to  case 1, and from proposition \ref{ak} we have the  Gelfand-Kirillov dimension equal to
$$n-1.$$
\subsection{Case 3. $n_1+1=n_2<n$.}

In this case we have:
\begin{align} E_{r,i}|_{\mathscr{B}}&=-x_i\ptl_{x_r}-y_i\ptl_{y_r}\qquad  &\text{for~}& 1\leq i<r\leq
n_1,\\
 E_{n_1+1,i}|_{\mathscr{B}}&=-{x_i x_{n_1+1}}-{y_i}\ptl_{y_{n_1+1}}\qquad  &\text{for} ~&i\in\overline{1,n_1},
\\
E_{t,i}|_{\mathscr{B}}&=-{x_i x_{t}}+y_{i}y_{t} \qquad  &\text{for} ~&i\in\overline{1,n_1},\;t\in\overline{n_1+2,n},\\
 E_{t,n_1+1}|_{\mathscr{B}}&=x_{t}\ptl_{x_{n_1+1}}+y_{n_1+1} y_{t}\qquad  &\text{for} ~&t\in\overline{n_{1}+2,n},\\
 E_{t,p}|_{\mathscr{B}}&=x_t\ptl_{x_p}+y_t\ptl_{y_p}\qquad&\text{for}~& n_1+2\leq p<t\leq
n.
\end{align} Then the above $E_{j,i}$ forms a basis for the subalgebra $\mathfrak{sl}(n,\mbb{F})_-$.

From Luo-Xu \cite{Luo-Xu} we know that for
$m_1,m_2\in\bb{N}$, ${\mathscr{H}}_{\langle-m_1,-m_2\rangle}$ has  a
highest-weight vector $x_{n_1}^{m_1}y_{n_1+2}^{m_2}$ of weight
$m_1\lambda_{n_1-1}-(m_1+1)\lambda_{n_1}-(m_2+1)\lambda_{n_1+1}+m_2(1-\delta_{n_1,n-2})\lambda_{n_1+2}$.
For $m_1,m_2\in\bb{N}$ with $m_2-m_1\geq 0$, ${\mathscr{H}}_{\langle
m_1,-m_2\rangle}$ has a highest-weight vector
$x_{n_1+1}^{m_1}y_{n_1+2}^{m_2}$ of weight
$-(m_1+1)\lambda_{n_1}+(m_1-m_2-1)\lambda_{n_1+1}+m_2(1-\delta_{n_1,n-2})\lambda_{n_1+2}$.
For $m_1,m_2\in\bb{N}$ with $m_1-m_2\geq 0$, ${\mathscr{H}}_{\langle
-m_1,m_2\rangle}$  has a highest-weight vector
$x_{n_1}^{m_1}y_{n_1+1}^{m_2}$ of weight
$m_1\lambda_{n_1-1}+(m_2-m_1-1)\lambda_{n_1}-(m_2+1)\lambda_{n_1+1}$.

The arguments for these three cases are similar to case 1, and we have the  Gelfand-Kirillov dimension equal to
$$d=\left\{
    \begin{array}{ll}
      2n-3, & \hbox{if $2=n_1+1=n_2<n$ ~or~ $n_1+1=n_2=n-1$}\\
      2n-2, & \hbox{if $2<n_1+1=n_2<n-1$.}
    \end{array}
  \right.$$


\subsection{Case 4. $n_1+1=n_2=n$.}

In this case we have:
\begin{align} E_{r,i}|_{\mathscr{B}}&=-x_i\ptl_{x_r}-y_i\ptl_{y_r}\qquad  &\text{for~}& 1\leq i<r\leq
n-1,\\
 E_{n,i}|_{\mathscr{B}}&=-{x_i x_{n}}-{y_i}\ptl_{y_{n}}\qquad  &\text{for} ~&i\in\overline{1,n-1}.
 \end{align} Then the above $E_{j,i}$ forms a basis for the subalgebra $\mathfrak{sl}(n,\mbb{F})_-$.
In this case, we denote
\eq\eta=\sum_{i=1}^{n-1}y_i\partial_{x_i}+x_ny_n.\en

From Luo-Xu \cite{Luo-Xu} we know that for $m_1,m_2\in\bb{N}$ with $m_2\leq m_1$, ${\mathscr{H}}_{\langle -m_1,m_2\rangle}$ has a highest-weight vector
$x_{n-1}^{m_1}y_n^{m_2}$ of weight
$m_1\lambda_{n-2}+(m_2-m_1-1)\lambda_{n-1}$. Moreover,
 ${\mathscr{H}}_{\langle
m,0\rangle}$ has a highest-weight vector $x_{n-1}^m$ of weight
$m\lambda_{n-2}-(m+1)\lambda_{n-1}$ for $m\in\bb{Z}$.

The arguments for these two cases are similar to  case 2, and we have the  Gelfand-Kirillov dimension equal to
$$n-1.$$

For
$m_1,m_2\in\bb{N}+1$, ${\mathscr{H}}_{\langle m_1,m_2\rangle}$  has a
highest-weight vector $\eta^{m_1+m_2}(x_{n-1}^{m_2}y_n^{-m_1})$ of
weight $m_2\lambda_{n-2}+(m_1-m_2-1)\lambda_{n-1}$.
Then similar to the the arguments in  case 2,  we have the  Gelfand-Kirillov dimension equal to
$$n-1.$$

\subsection{Case 5. $n_1=n_2<n-1$.}

In this case we have:
\begin{align} E_{r,i}|_{\mathscr{B}}&=-x_i\ptl_{x_r}-y_i\ptl_{y_r}\qquad  &\text{for~}& 1\leq i<r\leq
n_1,\\
 E_{t,i}|_{\mathscr{B}}&=-{x_i x_{t}}+y_{i}y_{t} \qquad  &\text{for} ~&i\in\overline{1,n_1},\;t\in\overline{n_1+1,n},\\
 E_{t,p}|_{\mathscr{B}}&=x_t\ptl_{x_p}+y_t\ptl_{y_p}\qquad&\text{for}~& n_1+1\leq p<t\leq
n.
\end{align} Then the above $E_{j,i}$ forms a basis for the subalgebra $\mathfrak{sl}(n,\mbb{F})_-$.

In this case, recall
\eq\zeta_1=x_{n_1-1}y_{n_1}-x_{n_1}y_{n_1-1},\;\;\zeta_2=x_{n_1+1}y_{n_1+2}-x_{n_1+2}y_{n_1+1}.
\en

From Luo-Xu \cite{Luo-Xu} we know that for $m_1,m_2\in\mbb{N}$, the subspace ${\mathscr{H}}_{\langle-m_1,-m_2\ra}$ has  a highest-weight vector
$x_{n_1}^{m_1}y_{n_1+1}^{m_2}$ of weight
$m_1(1-\dlt_{1,n_1})\lmd_{n_1-1}-(m_1+m_2+2)\lmd_{n_1}+m_2\lmd_{n_1+1}$.
The
subspace ${\mathscr{H}}_{\langle m_1+1,-m_2-m_1-1\ra}$ has  a highest-weight
vector $y_{n_1+1}^{m_2}\zeta_2^{m_1+1}$ of weight
$-(m_1+m_2+3)\lmd_{n_1}+m_2\lmd_{n_1+1}-(m_1+1)(1-\dlt_{n_1,n-2})\lmd_{n_1+2}$.
If $n_1\geq 2$, the subspace ${\mathscr{H}}_{\langle-m_1-m_2-1,m_2+1\ra}$ has
a highest-weight vector $x_{n_1}^{m_1}\zeta_1^{m_2+1}$ of weight
$(m_2+1)\lmd_{n_1-2}-m_1\lmd_{n_1-1}-(m_1+m_2+3)\lmd_{n_1}$.

Then  from Prop \ref{Nk} we have the  Gelfand-Kirillov dimension equal to
$$d=\left\{
    \begin{array}{ll}
      n-1, &  \hbox{if $1=n_1=n_2<n-1$} \\
      2n-4, & \hbox{if $n_1=n_2=2<n-1 \mathrm{~or~} 1<n_1=n_2=n-2$}\\
      2n-3, & \hbox{if $n_1=n_2=3<n=6$}\\
      2n-2, & \hbox{if $3\leq n_1=n_2\leq n-3, n\geq 7.$}
    \end{array}
  \right.$$
\subsection{Case 6. $n_1=n_2=n-1$.}

In this case we have:
\begin{align} E_{r,i}|_{\mathscr{B}}&=-x_i\ptl_{x_r}-y_i\ptl_{y_r}\qquad  &\text{for~}& 1\leq i<r\leq
n-1,\\
 E_{n,i}|_{\mathscr{B}}&=-{x_i x_{n}}+y_{i}y_{n} \qquad  &\text{for} ~&i\in\overline{1,n-1}.
\end{align} Then the above $E_{j,i}$ forms a basis for the subalgebra $\mathfrak{sl}(n,\mbb{F})_-$.

In this case, we denote
\eq\zeta_1=x_{n-2}y_{n-1}-x_{n-1}y_{n-2}.
\en

From Luo-Xu \cite{Luo-Xu} we know that for $m_1,m_2\in\mbb{N}$, the subspace ${\mathscr{H}}_{\langle-m_1,-m_2\ra}$ has a
highest-weight vector $x_{n_1}^{m_1}y_{n_1+1}^{m_2}$ of weight
$m_1(1-\dlt_{n,2})\lmd_{n-2}-(m_1+m_2+2)\lmd_{n-1}$. If $n\geq 3$,
the subspace ${\mathscr{H}}_{\langle-m_1-m_2-1,m_2+1\ra}$ has  a
highest-weight vector $x_{n_1}^{m_1}\zeta_1^{m_2+1}$ of weight
$(m_2+1)(1-\dlt_{n,3})\lmd_{n-3}-m_1\lmd_{n-2}-(m_1+m_2+3)\lmd_{n-1}$.

Then similar to the the arguments in  case 2,  we have the  Gelfand-Kirillov dimension equal to
$$n-1.$$

\subsection{Case 7. $n_1=n_2=n$.}

In this case we have:
\begin{align} E_{r,i}|_{\mathscr{B}}&=-x_i\ptl_{x_r}-y_i\ptl_{y_r}\qquad  &\text{for~}& 1\leq i<r\leq
n
\end{align} Then the above $E_{j,i}$ forms a basis for the subalgebra $\mathfrak{sl}(n,\mbb{F})_-$.

From Luo-Xu \cite{Luo-Xu} we know that the subspace ${\mathscr{H}}_{\langle-m_1-m_2,m_2\ra}$ is finite dimensional and has a
highest-weight vector $x_n^{m_1}\zeta_1^{m_2}$ of weight
$m_2(1-\dlt_{n,2})\lmd_{n-2}+m_1\lmd_{n-1}$. Obviously its Gelfand-Kirillov dimension is equal to $0$.

This completes the proof of our formula.


\section{Minimal Gelfand-Kirillov dimension module}
Let $M$ be an irreducible highest-weight module of
$\mathfrak{sl}(n,\bb F)$. We denote its Gelfand-Kirillov dimension
by $d_{M}$. From Vogan \cite{Vogan-81} and Wang \cite{wang-99}, we
know that the minimal Gelfand-Kirillov dimension is $d_M=n-1$. The
corresponding modules are called the minimal GK-dimension module.

From Luo-Xu \cite{Luo-Xu} and our formula \ref{formula}, we find the following result.
\begin{corollary}Let $n(\geq 2)$ be a positive integer. Fix $n_1,n_2\in\overline{1,n}$ with $n_1\leq n_2$.
The irreducible highest-weight
$\mathfrak{sl}(n,\bb{F})$-module ${\mathscr{H}}_{\langle \ell_1,\ell_2 \rangle}$  has the minimal Gelfand-Kirillov dimension if and only if
\begin{enumerate}
                        \item[(1)] $n_1< n_2=n$.
                           \item[(2)] $n_1=n_2=1$.
                           \item[(3)] $n_1=n_2=n-1$.
                         \end{enumerate}

\end{corollary}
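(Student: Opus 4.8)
The plan is to deduce this corollary from the explicit formula \eqref{formula} of Theorem~\ref{main}. By Vogan \cite{Vogan-81} and Wang \cite{wang-99}, the smallest Gelfand--Kirillov dimension occurring among infinite-dimensional irreducible highest-weight $\mathfrak{sl}(n,\bb F)$-modules is $n-1$; hence $\mathscr{H}_{\langle\ell_1,\ell_2\rangle}$ has the minimal Gelfand--Kirillov dimension precisely when the value assigned to it by \eqref{formula} equals $n-1$. So everything reduces to determining which admissible pairs $(n_1,n_2)$, with $1\le n_1\le n_2\le n$, fall into the value $n-1$.

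First I would check that, for $n\ge 2$, among the five possible values $2n-2,\,2n-3,\,2n-4,\,n,\,n-1$ of \eqref{formula} the value $n-1$ is attained only through the last (``otherwise'') branch: one has $n\ne n-1$ always, whereas $2n-2=n-1$, $2n-3=n-1$, $2n-4=n-1$ would force $n=1,2,3$ respectively, and a direct look shows that for $n=2$ the conditions defining the $2n-3$ branch and for $n=3$ the conditions defining the $2n-4$ branch are all vacuous. Consequently $d=n-1$ if and only if $(n_1,n_2)$ lies outside the union of the first four branches. I would then enumerate that complement by splitting into $n_1<n_2$ and $n_1=n_2$: for $n_1<n_2$ the branches of value $2n-2$ and $2n-3$ between them account for every pair except $n_1<n_2=n$, and for $n_1=n_2$ the branches of value $2n-2$, $2n-3$, $2n-4$, $n$ between them account for every pair except $n_1=n_2\in\{1,\,n-1,\,n\}$.

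This leaves the four candidate families $n_1<n_2=n$, $n_1=n_2=1$, $n_1=n_2=n-1$ and $n_1=n_2=n$. The last must be removed: when $n_1=n_2=n$ (Case~7 of Section~4) every $\mathscr{H}_{\langle\ell_1,\ell_2\rangle}$ is finite-dimensional, so its Gelfand--Kirillov dimension is $0\ne n-1$. What remains are exactly alternatives (1), (2), (3); and conversely, for each of these three families the list recalled in Section~3 from Luo--Xu \cite{Luo-Xu} exhibits genuinely infinite-dimensional irreducible modules with the prescribed $n_1,n_2$, for which Theorem~\ref{main} gives the value $n-1$, so the minimal dimension really is realized. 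I expect the one delicate point to be the bookkeeping in the second paragraph: the defining inequalities of the various branches of \eqref{formula} degenerate or overlap for the small values $n=2,\dots,6$, so one has to compare them case by case there rather than argue uniformly.
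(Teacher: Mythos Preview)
Your proposal is correct and follows the same approach as the paper: the paper's entire proof of this corollary is the single sentence ``From Luo--Xu \cite{Luo-Xu} and our formula \ref{formula}, we find the following result,'' and you have simply written out the case analysis that this sentence leaves implicit. Your handling of the boundary case $n_1=n_2=n$ (finite-dimensional, hence GK-dimension $0$, to be excluded) and of the small-$n$ coincidences among $2n-2,\,2n-3,\,2n-4,\,n,\,n-1$ is exactly what is needed to make the deduction rigorous.
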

%
%
%
%
%
%
%
%
%
%
%
%

We denote $\mathscr{A}=\mbb F [x_1,...,x_n]$. Fix $1\leq n_1<n$. Changing operators $\partial_{x_r}\mapsto -x_r$ and $x_r\mapsto
\partial_{x_r}$   in the canonical oscillator representation $E_{i,j}|\mathscr{A}=x_i\partial_j$ for $r\in\overline{1,n_1}$, we obtain the following
non-canonical oscillator representation of $\mathfrak{sl}(n,\bb{F})$ determined
by:
\begin{equation}E_{i,j}|_{\mathscr{A}}=\left\{\begin{array}{ll}-x_j\partial_{x_i}-\delta_{i,j}&\mbox{if}\;
i,j\in\overline{1,n_1};\\ \partial_{x_i}\partial_{x_j}&\mbox{if}\;i\in\overline{1,n_1},\;j\in\overline{n_1+1,n};\\
-x_ix_j &\mbox{if}\;i\in\overline{n_1+1,n},\;j\in\overline{1,n_1};\\
x_i\partial_{x_j}&\mbox{if}\;i,j\in\overline{n_1+1,n}.
\end{array}\right.\end{equation}
For any $k\in\bb{Z}$, we denote
$${\mathscr{A}}_{\langle
k\rangle}=\mbox{Span}\:\{x^\alpha\mid\alpha\in\bb{N}\:^n;\sum_{r=n_1+1}^n\alpha_r-\sum_{i=1}^{n_1}\alpha_i=k\}.
$$ It was presented by Howe \cite{Ho} that for
$m_1,m_2\in\bb{N}$ with $m_1>0$, ${\mathscr{A}}_{\langle -m_1\rangle}$ is an
irreducible highest-weight $\mathfrak{sl}(n,\bb{F})$-submodule with highest weight $m_1\lambda_{n_1-1}-(m_1+1)\lambda_{n_1}$ and ${\mathscr{A}}_{\langle
m_2\rangle}$ is an irreducible highest-weight $\mathfrak{sl}(n,\bb{F})$-submodule
with highest weight
$-(m_2+1)\lambda_{n_1}+m_2(1-\delta_{n_1,n-1})\lambda_{n_1+1}$.

By a similar argument with our formula, we have the following corollary.

\begin{corollary}Fix $1\leq n_1<n$.
For $m_1,m_2\in\bb{N}$ with $m_1>0$, the irreducible highest-weight module ${\mathscr{A}}_{\langle -m_1\rangle}$ and ${\mathscr{A}}_{\langle
m_2\rangle}$ have the minimal Gelfand-Kirillov dimension $(n-1).$

\end{corollary}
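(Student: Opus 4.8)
The plan is to reduce the computation for $\mathscr{A}_{\langle -m_1\rangle}$ and $\mathscr{A}_{\langle m_2\rangle}$ to exactly the same mechanism used in Case 2 of the main proof. First I would write down explicitly the negative root vectors $E_{j,i}|_{\mathscr{A}}$ acting on $\mathscr{A}$: for $1\le i<r\le n_1$ we get $E_{r,i}=-x_i\partial_{x_r}$, for $i\in\overline{1,n_1}$ and $t\in\overline{n_1+1,n}$ we get $E_{t,i}=-x_ix_t$, and for $n_1+1\le p<t\le n$ we get $E_{t,p}=x_t\partial_{x_p}$. These span $\mathfrak{sl}(n,\mbb F)_-$. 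I would then split this into $\mathfrak{g}_1$, spanned by the $E_{r,i}$ with $i<r\le n_1$ and the $E_{t,p}$ with $p<t$, and $\mathfrak{g}_2$, spanned by the multiplication operators $E_{t,i}=-x_ix_t$, so that $\mathcal{U}(\mathfrak{g}_-)=\mathcal{U}(\mathfrak{g}_2)\mathcal{U}(\mathfrak{g}_1)$.

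The key steps then mirror Case 2. Starting from the highest-weight vector $v_\lambda=x_{n_1}^{m_1}$ (for $\mathscr{A}_{\langle -m_1\rangle}$), the operators $E_{r,i}$ with $i<r<n_1$ and the $E_{t,p}$ annihilate it, while $E_{n_1,i}v_\lambda$ only redistributes the exponent among $x_1,\dots,x_{n_1}$; hence $\mathcal{U}(\mathfrak{g}_1)v_\lambda$ equals the finite-dimensional span $M_0$ of all monomials $\prod_{i=1}^{n_1}x_i^{k_i}$ with $\sum k_i=m_1$. Thus $\mathcal{U}(\mathfrak{g}_-)v_\lambda=\mathcal{U}(\mathfrak{g}_2)M_0$. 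Taking any basis monomial $u_0\in M_0$ and applying products $\prod E_{t,i}^{p_{ti}}=\prod(-x_ix_t)^{p_{ti}}$ of total degree $\le k$, the dimension count is governed precisely by the set $M_k$ of Proposition \ref{ak}, giving $\dim(\mathcal{U}_k(\mathfrak{g}_2)u_0)=\sum_{0\le m\le k}a_m$. By Proposition \ref{ak}, $a_m\approx a\,m^{n-2}$, so $\sum_{0\le m\le k}a_m\approx a'k^{n-1}$. Sandwiching $\dim(\mathcal{U}_k(\mathfrak{g}_2)M_0)$ between $\dim Span(\bigcup_{m\le k}M_k)$ and $\dim M_0$ times that same quantity, and applying the definition of Gelfand-Kirillov dimension together with Lemma \ref{hi-se}, yields $d=n-1$. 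The argument for $\mathscr{A}_{\langle m_2\rangle}$ with highest-weight vector $x_{n_1+1}^{m_2}$ is the dual one: here $\mathcal{U}(\mathfrak{g}_1)v_\lambda$ distributes the exponent among $x_{n_1+1},\dots,x_n$, and the growth is again controlled by the other binomial factor in Proposition \ref{ak}, giving the same answer.

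Since $n-1$ is the minimal Gelfand-Kirillov dimension for an irreducible highest-weight $\mathfrak{sl}(n,\mbb F)$-module (Vogan \cite{Vogan-81}, Wang \cite{wang-99}), this establishes minimality. I do not expect a serious obstacle: the only point requiring a little care is verifying that no cancellation or collapse occurs when expanding the monomials in $\mathcal{U}_k(\mathfrak{g}_2)u_0$ — that is, that the leading (top-degree-in-$x$) terms genuinely span a space of dimension $a_k$ and are not killed by relations — but this is immediate because the $E_{t,i}$ are pure multiplication operators by distinct variables, so products of them are monomials and are linearly independent exactly as counted in Proposition \ref{ak}. The upper bound is equally transparent since $\mathcal{U}_k(\mathfrak{g}_2)u_0$ lands inside a space of polynomials in $n$ variables of bounded degree, whose growth is $O(k^{n-1})$, or more sharply inside the span tracked by $M_k$.
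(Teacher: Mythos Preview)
Your proposal is correct and follows essentially the same approach the paper indicates: the paper simply states ``by a similar argument with our formula'' and does not spell out the details, and what you have written is precisely the natural specialization of the Case~2 argument (and of Proposition~\ref{ak}) to the single-variable-set module $\mathscr{A}$. The decomposition $\mathcal{U}(\mathfrak{g}_-)=\mathcal{U}(\mathfrak{g}_2)\mathcal{U}(\mathfrak{g}_1)$, the identification of $M_0$ as a finite-dimensional span of degree-$m_1$ (resp.\ degree-$m_2$) monomials, and the growth count via $a_k\approx ak^{n-2}$ are all exactly as in the paper's scheme.
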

\bf{Acknowledgment}.\quad\rm
 The author would like to thank Prof. Xiaoping Xu for several helpful discussions and his  financial support from NSFC Grant 11171324. The author is also very grateful to the referee for detailed comments on the manuscript, which helped to improve it greatly.


\begin{thebibliography}{99}





\bibitem{Be-Br-Le} Benkart, G., Britten, D. J., Lemire, F. W.:  Modules with bounded weight multiplicities for simple Lie algebras, {\it Math. Z.}, {\bf 225}, 333-353 (1997)


\bibitem{Bo-Kr} Borho, W., Kraft, H.: \"{U}ber die Gelfand-Kirillov dimension. {\it Math. Ann.}, {\bf 220}, 1-24 (1976)



\bibitem{Br-Ho-Le} Britten, D. J., Hooper, J.,  Lemire, F. W.:   Simple $C_n$ modules with multiplicities $1$ and applications, {\it Canad. J. Phys.}, {\bf 72}, 326-335 (1994)




\bibitem{Br-Kh-Le}
Britten, D. J.,  Khomenko, O., Lemire, F. W., Mazorchuk, V.:  Complete reducibility of torsion free $C_n$-modules of finite degree, {\it J. Algebra}, {\bf 276},  129-142 (2004)

\bibitem{Br-Le} Britten, D. J.,  Lemire, F. W.:   On Modules of bounded weight multiplicities for symplectic algebras, {\it Trans. Amer. Math. Soc.}, {\bf 351}, 3413-3431 (1999)



\bibitem{Fu}
Futony, V.:  The weight representations of semisimple finite-dimensional Lie algebras, {\it Ph.D. Thesis}, Kiev University, 1987.


\bibitem{Fer}
 Fernando, S. L.:  Lie algebra modules with finite-dimensional weight spaces. I, {\it Trans. Amer. Math. Soc.}, {\bf 322}, 757-781 (1990)


\bibitem{Ge-Ki}  Gelfand, I. G.,  Kirillov, A. A.:   Sur les corps li\'{e}s aux alg\'{e}bres enveloppantes des alg\'{e}bres
de Lie, {\it Publ. Math. IHES.},  {\bf 31}, 5-19 (1966)




\bibitem{Gr-Se-06}
Grantcharov, D.,  Serganova, V.: Category of $\mathfrak{sp}(2n)$-modules with bounded weight multiplicities, {\it Mosc. Math. J.},  {\bf 6}, 119-134 (2006)

\bibitem{Gr-Se-10}
Grantcharov, D.,  Serganova, V.:  Cuspidal representations of $\mathfrak{sl}(n+1)$, {\it Adv. Math.}, {\bf 224}, 1517-1547 (2010)



\bibitem{Ho} Howe, R.:   Perspectives on invariant theory: Schur
duality, multiplicity-free actions and beyond, in: The Schur
lectures (1992) (Tel Aviv),   Israel Math. Conf.
Proc., Vol. 8, pp. 1-182, Bar-Ilan Univ., Ramat Gan, 1995



\bibitem{Ja}
Jantzen, J. C.: Einh\"{u}llende Algebren halbeinfacher Lie-Algebren. (German) [Enveloping algebras of semisimple Lie algebras], Ergebnisse der Mathematik und ihrer Grenzgebiete (3) [Results in Mathematics and Related Areas (3)], Vol. 3, Springer-Verlag, Berlin, 1983



\bibitem{Joseph-78}
Joseph, A.:
Gelfand-Kirillov dimension for the annihilators of simple quotients of Verma modules,  {\it  J. London Math. Soc.} (2), {\bf 18}, 50-60 (1978)



\bibitem{Knapp}
 Knapp, A. W.:
  Lie groups beyond an Introduction, 2nd ed.,
     Progress in Mathematics, Vol. 140,
     Birkh\"{a}user Boston, Boston, 2002

\bibitem{Kr-Le}Krause, G.,  Lenagan, T. H.:  Growth of Algebras and Gelfand-Kirillov dimension, Research
Notes in Math.  Pitman Adv. Publ. Program, Vol. 116, 1985


\bibitem{Le}
Lepowsky, J.: Generalized Verma Modules, the Cartan-Helgason Theorem, and the Harish-Chandra Homomorphism, {\it J. Algebra},  {\bf 49}, 470-495 (1977)


\bibitem{Luo-Xu}  Luo, C.,  Xu, X.: $\bb{Z}^2$-graded
oscillator representations of  $\mathfrak{sl}(n)$,  {\it Commun. Algebra,}, {\bf 41},  3147-3173 (2013)
%

\bibitem{Ma}
Mathieu, O.:   Classification of irreducible weight modules,  {\it Ann. Inst. Fourier (Grenoble)}, {\bf 50}, 537-592 (2000)


\bibitem{Me}
 Melnikov, A.:  Irreducibility of the associated varieties of simple highest weight modules in $\mathfrak{sl}(n)$, {\it C. R. Acad.
Sci. Paris S\'{e}r. I  Math.}, {\bf 316},  53-57 (1993)



\bibitem{NOTYK}
Nishiyama, K.,   Ochiai, H., Taniguchi, K.,
Yamashita, H.,   Kato, S.:
  Nilpotent orbits, associated cycles and
  Whittaker models for highest weight representations,
    {\it  Ast\'erisque}, {\bf 273}, 1-163 (2001)


\bibitem{Ri} Riordan, J.: Combinatorial identities, John Wiley \& Sons, Inc., New York-London-Sydney, 1968



\bibitem{Sun}
Sun, B.:  Lowest weight modules of $\widetilde{Sp_{2n}(\bb{R})}$ of minimal Gelfand-Kirillov dimension,  {\it J. Algebra},  {\bf 319}, 3062-3074 (2008)






\bibitem{Vogan-78}
Vogan, D.:
   Gelfand-Kirillov dimension for
  Harish-Chandra modules,
    {\it Invent.Math.}, {\bf 48}, 75-98 (1978)


\bibitem{Vogan-81}
Vogan, D.:  Singular unitary representations, in: Noncommutative harmonic analysis and Lie groups,  Lecture Notes in Mathamatics, Vol. 880, pp. 506-535, Springer, Berlin-New York, 1981


\bibitem{Vogan-91}
Vogan, D.:
   Associated varieties and unipotent representations,
    in:  Harmonic Analysis on
    Reductive Groups,  Progr. Math. Vol. 101, pp. 315--388,
       Birkh\"{a}user, Boston, 1991


\bibitem{wang-99}
Wang, W.:  Dimension of a minimal nilpotent orbit, {\it Proc. Amer. Math. Soc.}, {\bf 127},  935-936 (1999)

\bibitem{Xu-08}
Xu, X.:  Flag partial differential equations and
representations of Lie algebras,  {\it Acta. Appl. Math.}, {\bf 102}, 249--280
(2008)



\bibitem{Ya-94}  Yamashita, H.:   Criteria for the finiteness of restriction of $U(\frk g)$-modules to subalgebras and applications to Harish-Chandra modules: a study in relation to the associated varieties,
     {\it J. Funct. Anal.}, {\bf 121}, 296-329 (1994)



\bibitem{Za-Sa} Zariski, O.,  Samuel, P.: Commutative algebra,  Vol. II, Springer-Verlag, New York/Berlin(World Publishing Corporation, China), 1975



\end{thebibliography}
\end{document}